\documentclass[12pt]{article}
\usepackage[utf8]{inputenc}
\usepackage{graphicx}
\usepackage{caption}
\usepackage{latexsym}
\usepackage[english]{babel}

\usepackage{amscd,amsfonts,amsmath,amssymb,amsthm}
\usepackage[mathscr]{eucal}

\usepackage[usenames]{color}

\input{xy}
\xyoption{all}

\usepackage{tikz-cd}
\usepackage{tikz}
\usetikzlibrary{decorations.pathreplacing}

\usepackage{tensor} 
\usepackage{mathdots} 

\usepackage[toc,page]{appendix} 
\usepackage{hyperref}
\hypersetup{
    colorlinks=true,
    linkcolor=blue,
    filecolor=magenta,
    urlcolor=magenta,
    citecolor=blue
}

\usepackage[draft]{changes}   
\usepackage{framed}

\newcommand{\adj}[1]{{#1}^*}
\renewcommand{\deg}{\text{degree}}
\newcommand{\gt}{\theta}

\newcommand{\R}{\mathbb{R}}
\newcommand{\C}{\mathbb{C}}

\theoremstyle{plain}
\newtheorem{theorem}{Theorem}

\newtheorem{lemma}{Lemma}
\newtheorem{cor}{Corollary}
\newtheorem{proposition}{Proposition}

\theoremstyle{definition}

\theoremstyle{remark}
\newtheorem{remark}{Remark}

\newcounter{dimension}

\title{
On the Irreducibility of the Differential Operators Associated to Random Walks in the Standard Euclidean Lattice
}
\author{Dorin Dumitra\c{s}cu\footnote{Corresponding author: \texttt{ddumitrascu@adrian.edu}} and Liviu Suciu}
\date{\today}

\begin{document}

\maketitle

\begin{abstract}
\noindent
For a positive integer $d\geq 1$, we consider the sequences $(A_{n}^{(d)})_n$ and $(x_{n}^{(d)})_n$ given by
$$
A_{n}^{(d)} =\sum_{n_1+\dots+n_d=n} \frac{(2n)!}{(n_1!)^2 (n_2!)^2 \dots (n_d!)^2}
\quad \text{ and } \quad
x_{n}^{(d)} = \frac{A_{n}^{(d)}}{\binom{2n}{n}}.
$$
They have rich combinatorial interpretations, but we focus on the analytical properties of their generating functions $A_d$ and $F_d$.
We use a modified Borel transform, and algebraic and combinatorial considerations to prove that $F_d$ is annihilated by an irreducible Fuchsian differentiable operator $L_{d-1,F}$ of order $d-1$. We determine the structure of $F_d$ as a global analytic function (analytic continuations from the original disk of definition, branches, finite singularities, and the structure of $F_d$ near the finite singularities). Additionally, we show that the sequence $(x_n^{(d)})_n$ satisfies a minimal recurrence of width $r=\lfloor (d+1)/2 \rfloor$ with polynomial coefficients
$$
Q_r(n+r)\,x_{n+r}+\cdots + Q_0(n)\,x_n=0, \; n \ge 0.
$$
These polynomials are shown to have very specific symmetries and we compute explicitly $Q_0$, $Q_1$, and $Q_r$. Similar results about the functions $A_d$ are obtained.
\end{abstract}

\vspace{10pt}
\noindent{\it 2020 Mathematics Subject Classification}:
Primary
05A15; 
Secondary
05A10, 
13N10, 
33F99, 
34M35, 
68W30. 

\vspace{7pt}
\noindent{\it Keywords}: random walk, generating function,  differential operator, $P$-recurrence, singularity analysis, period integral, formal self-adjointness, Hadamard convolution, Borel transform, symmetric powers, Sylvester-Kac tri-diagonal matrix.

\newpage
\tableofcontents

\section{Introduction}\label{sec-1-introduction}
For a positive integer $d\geq 1$, we consider the sequences $(A_{n}^{(d)})_n$ and $(x_{n}^{(d)})_n$ given, respectively, by
$$
A_{n}^{(d)} =\sum_{n_1+\dots+n_d=n} \frac{(2n)!}{(n_1!)^2 (n_2!)^2 \dots (n_d!)^2}
\quad \text{ and } \quad
x_{n}^{(d)} = \frac{A_{n}^{(d)}}{\binom{2n}{n}} = \sum_{n_1+\dots+n_d=n} \frac{(n!)^2}{(n_1!\, n_2! \dots n_d!)^2}.
$$
They have rich combinatorial interpretations (\cite{Domb, GP93, RS09, DS1}). For example, $A_{n}^{(d)}$ counts the number of closed random walks of length $2n$ that start at the origin of the standard lattice of $\R^d$,\footnote{In \cite{DS1} we used the notation $A_{2n}^{(d)}$ to make explicit the length of the walks, but here, for notational convenience, we are using $A_n^{(d)}$.} and the sequence $(x_{n}^{(d)})_n$ appears in the study of staircase polygons and  abelian squares. The sequence $(x_{n}^{(d)})_n$ also appears naturally if one considers the recurrence relation satisfied by the $(A_{n}^{(d)})_n$ sequence (\cite[Corollary~2.1]{DS1}):

\begin{equation}\label{eqn-A-recurrence}
A_{n}^{(d+1)} = \binom{2n}{n} \cdot \sum_{k=0}^{n} \binom{n}{k}^2 \,
        \frac{A_{k}^{(d)}}{\binom{2k}{k}}.
\end{equation}

\noindent
The two sequences define the analytic generating functions
$$
A_d(z) =\sum_{n \ge 0} A_n^{(d)} \, z^n
\quad \text{ and } \quad
F_d(z)=\sum_{n \ge 0} x_n^{(d)} \, z^n.
$$
Note that, for all $n\geq 0$,  $x_n^{(1)}=1$ and $x_n^{(2)}=A_n^{(1)}=\binom{2n}{n}$. This means that $F_1(z)=1/(1-z)$ and $F_2(z)=A_1(z)=1/\sqrt{1-4z}$.

\vspace{.4cm}
There are at least four ways of thinking of the sequences $(x_n^{(d)})_n$ and the associated power series $F_d$: inductive through the Legendre process at the recurrence level, inductive through Hadamard convolution at the analytic level, through expressing $F_d$ as a period integral and discussing the differential operator that annihilates it, and through a modified Borel transform of $F_d$.

We detail each of these four ways of thinking. The recurrence is (see equation (\ref{eqn-A-recurrence}) above):

$$
x_n^{(d+1)} = \sum_{k=0}^n {n \choose k}^2 x_k^{(d)}, \text{ for } n \ge 1, d \ge 1,
$$
with $x_n^{(1)}=1$. The Hadamard convolution is given by
$$
F_{d+1}(w) = \frac{1}{2\pi i}\int_{|t|=1/s_d}
    F_d(t)P(t,w)^{-1/2}dt,
$$
for $P(t,w)=w^2-wt(2w+2)+t^2(w-1)^2$, $s_d$ a small enough number, and $F_{1}(w) = \frac{1}{1-w}$. See \cite[Subsection~3.4]{DS1}.
The period integral is given by
\[
F_d(z)=\frac{1}{(2\pi i)^d}\oint \cdots \oint
\frac{1}{1-zg_x(\mathbf{u})}\,\frac{du_1}{u_1}\cdots\frac{du_d}{u_d}
=
\sum_{k=0}^{\infty} z^{k}
\sum_{k_1+\cdots+k_d=k}
\frac{(k!)^2}{(k_1!\cdots k_d!)^2},
\]
where $g_x$ is the Laurent polynomial $g_x(\mathbf{u})=(u_1+\cdots + u_d)(1/u_1+\cdots +1/u_d)$. Since $g_x(\lambda \mathbf{u})=g_x(\mathbf{u})$, for $\lambda \neq 0$, the Laurent polynomial has scalar symmetry in addition to the obvious discrete symmetries coming from permuting the variables $u_1,\cdots, u_d$. Hence it is expected that the corresponding differential operator obtained from this period representation would be of order $(d-1)$ and we will prove that this is indeed the case. The modified Borel transform is given by
$$
Y_d(t)=\sum_{n \ge 0} \frac{x_n^{(d)}}{(n!)^2} \, t^n
 = \sum_{n \ge 0} y_n^{(d)} \, t^n, \text{ with }
 y_n^{(d)}=\frac{x_n^{(d)}}{(n!)^2},
$$
and satisfies the property that $Y_d = Y_1^d$.

It follows that one can think of $(A_n^{(d)})_n$ in the corresponding four ways. The recurrence is given by equation (\ref{eqn-A-recurrence}). The Hadamard convolution is given by $A_d(z)=F_d(z)*A_1(z)=F_d(z)*F_2(z)$. The period integral is given by
\[
A_d(z)=\frac{1}{(2\pi i)^d}\oint \cdots \oint
\frac{1}{1-zg_A(\mathbf{u})}\,\frac{du_1}{u_1}\cdots\frac{du_d}{u_d}
=
\sum_{k=0}^{\infty} z^{k}
\sum_{k_1+\cdots+k_d=2k}
\frac{(2k)!}{(k_1!\cdots k_d!)^2},
\]
where $g_A$ is the Laurent polynomial $g_A(\mathbf{u})=u_1+1/u_1 +\cdots + u_d+1/u_d$. This polynomial $g_A$ has discrete symmetries generated by permuting the variables $u_1,\cdots, u_d$ and by the transformations $u_j\mapsto u_j^{\pm 1}$. Beukers and Vlasenko \cite{BV25} showed that, as expected, the corresponding differential operator following from this period representation is of order $d$. The modified Borel transform is given by
$$
Y_{d}(t)=\sum_{n \ge 0} \frac{A_n^{(d)}}{(2n)!} \, t^n
 = \sum_{n \ge 0} y_n^{(d)} \, t^n, \text{ since }
 y_n^{(d)}=\frac{x_n^{(d)}}{(n!)^2}=\frac{A_n^{(d)}}{(2n)!}.
$$

\vspace{.4cm}
In this paper we use a modified Borel transform, and algebraic and combinatorial considerations to prove the $F_d$ is annihilated by an irreducible, hence minimal for $F_d$, Fuchsian differentiable operator $L_{d-1,F}$ of order $d-1$. We also prove that this operator has many interesting properties (see Theorem~\ref{T-LF-properties}). We also determine the structure of $F_d$ as a global analytic function (analytic continuations from the original disk of definition, branches, finite singularities, and the structure of $F_d$ near the finite singularities). Additionally, we show that the sequence $(x_n^{(d)})_n$ satisfies a minimal recurrence of width $r=\lfloor (d+1)/2 \rfloor$, so with $r+1$ polynomial coefficients $Q_0, \cdots, Q_r$. These polynomials are shown to have very specific symmetries (even or odd with respect to a center of symmetry) and we compute explicitly  $Q_0$, $Q_1$, and $Q_r$, in any dimension $d$.

In Theorem~\ref{T-LA-properties}, using the close relationship at all levels (recurrences, generating functions, ODEs, analytic properties) between $(x_n^{(d)})_n$ and $(A_n^{(d)})_n$, we prove the corresponding results for the sequence $(A_n^{(d)})_n$ and its generating function $A_d$. In particular, we show that $A_d$ is annihilated by an irreducible Fuchsian differentiable operator $L_{d,A}$ of order $d$, confirming the conjecture from \cite{BV25}. It also follows that the sequence $(A_n^{(d)})_n$ satisfies a minimal recurrence of width $r=\lfloor (d+1)/2 \rfloor$.

\vspace{.4cm}
The structure of the paper is as follows. In Section~\ref{sec-2-generalities} we recall some general properties of linear differential operators with rational functions coefficients and present some basic results that are used later in the paper. In Section~\ref{sec-3-framework} we construct our framework that connects the data associated with a linear differential operator $L$. This allows us to move between  the standard form (where the basic differential operator is $D=d/dz$), the $\gt$-form (where the basic differential operator is $\gt = z \, d/dz$), and the $P$-recurrence of the coefficients when there is a power series annihilated by $L$. In Section~\ref{sec-4-SA-ASA} we discuss the notion of formal-self-adjointness for such an operator $L$ written in (canonical) polynomial form, how it translates to the $\gt$-form of $L$, and how it is expressed in terms of the polynomial coefficients of the $P$-recurrence for a power series annihilated by $L$. After presenting in Section~\ref{sec-Main} our main results mentioned above, we provide the proofs in Section~\ref{sec-proofs}. These proofs use mostly algebraic and combinatorial methods, including classical facts about the Sylvester-Kac tri-diagonal matrix. For the structure of $F_d$ and $A_d$ as global (multi-valued) analytic functions we expand on the Hadamard convolution techniques from our original paper \cite{DS1}. We end the paper with Section~\ref{sec-Appendix} which serves as an Appendix. We present the framework data (recurrences, and the $L$ operators in both standard and $\gt$-forms) for dimensions $d=3$~to~$8$.

\section{Generalities about ODEs}\label{sec-2-generalities}

We work in $\mathbb{C}(z)\langle D\rangle$, the space of linear differential operators with rational functions coefficients, where as usual $D=d/dz$ and notice that $Dz=zD+1$.
Since the space of coefficients $\mathbb{C}(z)$ does not commute with $D$, it is important to specify a ``canonical" form for $L$. Hence we say that a differential operator $L$ is in {\it canonical polynomial form (CPF)} if

$$
L=\sum_{k=0}^{q} P_k(z) D^k, \text{ with } P_k(z)\in \mathbb{C}[z] \text{ polynomials without (non-constant) common factors.}
$$

\noindent
As usual, $q$ is the order of $L$ and the roots of $P_q(z)$ are the finite singularities of $L$. Any other finite point is called ordinary. By general theory, the solution space at any finite singularity $z=a$ is of dimension $q$ and we call $z=a$ {\it Fuchsian singularity} if all solutions are locally bounded by a fixed power of $z-a$. This is also called a {\it regular singular point} and the given condition is equivalent to the indicial equation having degree $q$. Assuming without loss of generality that $a=0$, the Fuchsian condition is that $P_k(z)/P_q(z)$ has a pole of order at most $q-k$. If $c_k$ is the coefficient of this pole, the indicial equation is

$$
(\lambda)_q +c_{q-1}(\lambda)_{q-1}+\dots + c_1\lambda + c_0 = 0,
$$
where  $(\lambda)_k = \lambda (\lambda-1)\cdots (\lambda-k+1)$ are the falling factorials. The roots are called {\it the symbols} or {\it the local exponents} at $a$.
If the solution space at a finite singularity consists only of analytic functions, we call that an apparent singularity.
At $z=\infty$, we use the change of variable $w=1/z$ and we study the transformed $L_w$ operator at $w=0$. As is well known (see \cite{Ince}), the Fuchsianity at $\infty$ is equivalent to

$$
\text{degree}(P_q)\geq  \text{degree}(P_k)+q-k.
$$

\vspace{.2cm}
\begin{lemma}\label{L-general}
Let $L$ be a Fuchsian differential operator (meaning that all finite singularities and $\infty$ are Fuchsian). We assume that the order $q\geq 1$, $L$ is in CPF, and $L$ has $N$ singularities (the roots of $P_q$ together with $\infty$, if $0$ is a singularity of the transformed $L_w$ operator). Then:

\begin{enumerate}
\item The Frobenius relation holds:
$$
\text{sum on all symbols}= \frac{q(q-1)}{2} (N-2).
$$

\item If $N \geq 2$ and $z=0, a_1, \dots, a_{N-2}, \infty$ are the singularities of $L$
such that all symbols at all of them are non-negative numbers and the sum of the symbols at each of $a_1, \dots, a_{N-2}$ is at least $\displaystyle{ \frac{q(q-1)}{2}}$, then the symbols of $L$ at $0$ and $\infty$ are all zero. (Here we allow $N=2$, where the condition on the $a_k$'s is trivially satisfied, since there are none.)

\end{enumerate}
\end{lemma}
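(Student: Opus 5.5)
For part (1), the Frobenius (Fuchs) relation, I will compute the sum of exponents at each point from the coefficient $P_{q-1}/P_q$. Put $L$ in monic form $D^q + R(z)D^{q-1}+\cdots$ with $R=P_{q-1}/P_q$. At a finite point $a$, the indicial equation is $(\lambda)_q + c_{q-1}(\lambda)_{q-1}+\cdots=0$ with $c_{q-1}=\operatorname{Res}_{z=a}R$. Since $(\lambda)_q=\lambda^q-\frac{q(q-1)}{2}\lambda^{q-1}+\cdots$, the sum of the roots is $\frac{q(q-1)}{2}-\operatorname{Res}_{a}R$.

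This formula applies at every finite point, ordinary or singular. At an ordinary point the exponents are $0,1,\dots,q-1$ and the residue is $0$, so nothing special happens there.

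At $\infty$ I substitute $w=1/z$, so $D=-w^2\,d/dw$. Expanding $D^q$ in powers of $d/dw$ gives $(-1)^q\bigl(w^{2q}(d/dw)^q + q(q-1)w^{2q-1}(d/dw)^{q-1}+\cdots\bigr)$. Hence the new coefficient of $(d/dw)^{q-1}$ in monic form is $\frac{q(q-1)}{w}-\frac{1}{w^2}R(1/w)$, and its residue at $w=0$ is $q(q-1)+\operatorname{Res}_{z=\infty}R$, with the convention $\operatorname{Res}_\infty R=-\operatorname{Res}_{w=0}w^{-2}R(1/w)$ and sign bookkeeping to be checked. So the exponent sum at $\infty$ is $\frac{q(q-1)}{2}-q(q-1)-\operatorname{Res}_\infty R = -\frac{q(q-1)}{2}-\operatorname{Res}_\infty R$.

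Adding over the $N-1$ finite singularities and $\infty$, the residue theorem $\sum_a \operatorname{Res}_a R+\operatorname{Res}_\infty R=0$ makes all residues cancel. The total is $(N-1)\frac{q(q-1)}{2}-\frac{q(q-1)}{2}=\frac{q(q-1)}{2}(N-2)$.

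One caveat: if $\infty$ is not a singularity, its exponents are those of an ordinary point, namely $0,\dots,q-1$ up to sign convention. Its contribution $-\frac{q(q-1)}{2}$ must then be handled consistently. I will check that the count still yields $(N-2)$ with $N$ the number of actual singularities. This works because an ordinary point contributes $\frac{q(q-1)}{2}$ to the formula $\frac{q(q-1)}{2}(N'-2)$ and also to the exponent sum, so adding or removing ordinary points is harmless.

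The main obstacle is this bookkeeping at $\infty$: the sign of the residue and the exact coefficient produced by the change of variables. I will verify it on $q=1$, $L=D$, where the exponents are $0$ at $0$ as an ordinary point and at $\infty$ the solution $1$ has exponent $0$, giving sum $0=0\cdot(N-2)$. I will also check $q=2$ with $L=D^2$, whose solutions $1,z$ have exponents $0,-1$ at $\infty$. The sum is $-1$, which agrees with $\frac{q(q-1)}{2}(N-2)=1\cdot(1-2)=-1$ when $\infty$ alone is counted, so $N=1$.

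For part (2), I will apply (1). The total exponent sum equals $\frac{q(q-1)}{2}(N-2)$. The points $a_1,\dots,a_{N-2}$ together contribute at least $(N-2)\frac{q(q-1)}{2}$, so the sum of the exponents at $0$ and $\infty$ is at most $0$. All of these exponents are non-negative, so each one must be $0$. When $N=2$, the sum of exponents at $0$ and $\infty$ is $0$ directly, and the same conclusion follows.
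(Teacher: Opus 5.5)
Your proposal is correct and follows the paper's approach. Part 2 is derived exactly as you do, from Part 1 plus non-negativity of the symbols. For Part 1 the paper only cites Ince, and your residue-theorem computation is the classical proof behind that citation; your signs are right, giving exponent sum $\frac{q(q-1)}{2}-\operatorname{Res}_a R$ at finite $a$ and $-\frac{q(q-1)}{2}-\operatorname{Res}_\infty R$ at $\infty$.

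Your caveat about a non-singular $\infty$ needs no separate treatment, because the formula at $\infty$ still holds there. Ordinarity at $w=0$ forces $\operatorname{Res}_\infty R=-q(q-1)$, so the contribution of $\infty$ is the expected $+\frac{q(q-1)}{2}$, not $-\frac{q(q-1)}{2}$ as you wrote.
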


\noindent
{\it Remarks.}
If $P_q$ is constant, then the Fuchsianity at $\infty$ implies that all $P_k$ are zero for all $k<q$ so $L=cD^q$, and we easily see that $\infty$ is a singularity precisely if $q \geq 2$, so $N=0$ when $L=cD$, $c \neq 0$, and $N=1$ if $L=cD^q$, $q \geq 2$, $c \neq 0$.

If $P_q$ has only one root, we use a linear transformation to put the singularity at $z=0$. So, $P_q(z)=c_q \, z^m$, $m\geq 1$. The Fuchsianity at $z=0$ and $\infty$ implies that $P_{q-1}(z)=c_{q-1} \, z^{m-1}$, with $c_{q-1}\neq 0$. Continuing inductively, the CPF shows that $m\leq q$ and
$$
L=c_q \, z^m D^k + c_{q-1} \, z^{m-1} D^{q-1} +\cdots +
c_{q-m}\, D^{q-m}, \text{ with all } c_k\neq 0.
$$
The transform $w=1/z$ shows that $\infty$ is a singularity unless the transformed operator (after simplifying the highest common power of $w$) becomes $L_{\infty}=cD_w^q, q \geq 2$.
So in general $N \geq 2$, except in the special cases above ($L=cD^q$ or a transform of it under $w-a=\frac{1}{z}$)

Also, at a non-singular point $a$, the symbol is $0,1, \cdots, q-1$, so the Frobenius relation is consistent, in the sense that artificially considering several ordinary points as ``singular," still preserves it.

\vspace{.2cm}
\begin{proof}
Part 1 is a classical well known result \cite{Ince}. Part 2 follows from Part~1 and the non-negativity of all symbols.
\end{proof}

\vspace{.2cm}
Assuming $L$ is Fuchsian at $z=0$, we say that $L$ {\it is of MUM-type (maximal unipotent monodromy) at z=0} if the symbols at $z=0$ are all zero. This is equivalent to the structure of the local solution space being a full Jordan logarithmic block under local monodromy.

\vspace{.5cm}
\begin{lemma}\label{L-symbols}
Let $L \in \mathbb{C}(z)\langle D\rangle$ be in canonical polynomial form with a Fuchsian singularity at $z=0$ (so $P_q(0)=0$), then if $P_q(z)=z^m \widetilde{P}_q(z)$, with $\widetilde{P}_q(0)\neq 0$, the following hold.

\begin{enumerate}
\item $1\leq m \leq q$.
\item If $z=0$ is MUM, then $m=q-1$.
\item If $z=0$ is a simple singularity ($m=1$) then the symbol is $\{0, 1, 2, \cdots, q-2, \alpha\}$, where $\alpha$ could be any complex number.
\end{enumerate}

\end{lemma}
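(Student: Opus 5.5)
The plan is to work directly with the indicial polynomial $I(\lambda)=\sum_{k=0}^{q} c_k(\lambda)_k$, with $c_q=1$ and $c_k$ the coefficient of $z^{-(q-k)}$ in $P_k/P_q$. Throughout, the canonical polynomial form is used only through one consequence: $z$ cannot divide every $P_k$.

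\emph{Part 1.} Since $P_q(0)=0$, we have $m\ge 1$. Suppose $m>q$. The Fuchsian condition says that $P_k/P_q$ has a pole of order at most $q-k$ at $0$. Hence $\operatorname{ord}_0 P_k\ge m-(q-k)\ge m-q\ge 1$ for every $k$. So $z$ divides all the $P_k$, which contradicts the CPF. Therefore $m\le q$.

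\emph{Part 2.} MUM means $I(\lambda)=\lambda^q$. Expanding $\lambda^q=\sum_k S(q,k)(\lambda)_k$ in Stirling numbers of the second kind forces $c_k=S(q,k)$. In particular $c_0=0$ (for $q\ge1$) and $c_1=1$; moreover $c_k\neq 0$ for all $1\le k\le q$.

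From $c_1\ne 0$ we get that $P_1/P_q$ has a pole of exact order $q-1$. Thus $\operatorname{ord}_0P_1=m-q+1\ge 0$, which gives $m\ge q-1$. Combined with Part 1, it remains to rule out $m=q$.

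If $m=q$, then $c_k\neq0$ gives $\operatorname{ord}_0 P_k=k\ge1$ for $1\le k\le q$. Also $c_0=0$ means $P_0/P_q$ has no pole of order $q$, so $\operatorname{ord}_0P_0\ge 1$. Then all $P_k$ vanish at $0$, again contradicting the CPF. Hence $m=q-1$.

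The main point to check is the case $q=1$. There MUM would force $m=0$, contradicting $P_q(0)=0$ and the fact that $0$ is a singularity. So the statement implicitly concerns $q\ge2$, and the same CPF argument shows that $q=1$ cannot be MUM.

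\emph{Part 3.} With $m=1$, the quotient $P_k/P_q$ has at most a simple pole at $0$. Hence $c_k=0$ whenever $q-k\ge2$, i.e.\ for $k\le q-2$. The indicial polynomial becomes
$$
(\lambda)_q+c_{q-1}(\lambda)_{q-1}=(\lambda)_{q-1}\bigl(\lambda-(q-1)+c_{q-1}\bigr).
$$
Its roots are $0,1,\dots,q-2$ together with $\alpha=q-1-c_{q-1}$. Here $\alpha$ can be arbitrary, since $c_{q-1}=P_{q-1}(0)/\widetilde P_q(0)$ is unconstrained.
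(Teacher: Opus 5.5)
Your proposal is correct and follows essentially the same route as the paper. For Part 1, the Fuchsian bound on $\operatorname{ord}_0 P_k$ together with the CPF gives $m\le q$. For Part 2, the Stirling expansion $\lambda^q=\sum_k S(q,k)(\lambda)_k$ fixes exact pole orders, giving $m\ge q-1$, and then $m=q$ is ruled out because $c_0=0$ would force $z$ to divide every $P_k$. For Part 3, the indicial polynomial factors as $(\lambda)_{q-1}\bigl(\lambda-(q-1)+c_{q-1}\bigr)$. Your observation that the $q=1$ case is vacuous, since MUM is impossible at a genuine singularity of a first-order operator in CPF, is a small clarification the paper leaves implicit.
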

\begin{proof}
    By assumption $m\geq 1$, and by Fuchsianity $z^{m-q+k}$ divides $P_k$ as long as $m-q+k\geq 1$. If $m\geq q+1$ then $m-q+k\geq 1$ for all $0\leq k \leq q$, so $z$ divides all the polyunomials $P_0$, $P_1$, etc. which contradicts our assumption that $L$ is in CPF. For the second part, the condition MUM at $z=0$ means that the indicial equation is $\lambda^q=0$. It is well known that, for $q\geq 1$,
$$
\lambda^q = \sum_{k=0}^{q}S(q,k) (\lambda)_k,
$$
where $S(q,k)$ are the Stirling numbers of second kind, with $S(q,0)=0$ and $S(q,k)>0$, for $1\leq k\leq q$. If $c_k$ are the coefficients of the indicial equation at $z=0$, then $c_k=S(q,k)$, so $c_0=0$ and $c_k>0$, for $1\leq k\leq q$. This implies that
${P_k(z)}/{z^m}$ has a pole of order of exactly $q-k$, or $q\geq 1$, which means that $m\geq q-1$. If $m=q$, then $z$ divides all $P_k$, for $1\leq k \leq q$, while now $c_0=P_0(0)=0$ implies that $z$ divides $P_0$ as well. This would again contradict CPF. The last part follows because the indicial equation is then $(\lambda)_q + c_{q-1} (\lambda)_{q-1}=0$, which has the roots $0, 1, \cdots, q-2$, and $q-1-c_{q-1}$.
\end{proof}

It is well known \cite[Section4.4]{Kauers} that that $\mathbb{C}(z)\langle D\rangle$ has right factorization (which is far from unique), so if $L$ is reducible in the sense that its differential module is reducible then $L=AB$, with $A$ and $B$ also in $\mathbb{C}(z)\langle D\rangle$ and $\text{order}(A) + \text{order}(B) = \text{order}(L)$. Note that even if $L$ is in CPF then $A$ and $B$ may not have polynomial coefficients. For example, $D^2=(D+\tfrac{1}{z+\alpha})(D-\tfrac{1}{z+\alpha})$, for any $\alpha \in \C$.

\vspace{.5cm}
\begin{lemma}\label{L-factorization}
Let $L \in \mathbb{C}(z)\langle D\rangle$ be a linear differential operator and suppose $L = A B$ is a factorization.

\begin{enumerate}

\item Let $z=a$ be a singular point of $L$. Then
$
\mathrm{Sol}_a(B) \subseteq \mathrm{Sol}_a(L).
$
In particular, the local solution space of $B$ is a subspace of that of $L$, so if $a$ is a Fuchsian singularity of $L$ then it is a Fuchsian singularity of $B$. Moreover not only are the local exponents of $B$ contained in those of $L$, but the full local solution structure (including logarithmic blocks) is inherited as a substructure.

\item If $L$ is Fuchsian, then $A$ and $B$ are Fuchsian.

\item If $L$ is of MUM-type at $z=0$, then $B$ is also of MUM-type at $0$, and $B$ annihilates the unique analytic solution $F(z)=1+x_1 z+ \cdots$ of $L$.

\item Let $L$ be of MUM-type at $z=0$ and $F$ the unique solution satisfying $F(0)=1$. The operator $L$ is irreducible if and only if it is minimal for $F$, i.e. there is no nontrivial operator of lower order annihilating $F$.

\item If $B$ has other finite singularities than $L$, then these are apparent, hence at each such point the local exponents are strictly increasing nonnegative integers.

\end{enumerate}
\end{lemma}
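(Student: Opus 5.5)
We argue item by item, using only the basic fact that if $L=AB$ then every local solution $f$ of $B$ near any point satisfies $Lf=A(Bf)=A(0)=0$. This gives part 1 directly: at a singular point $a$, any local (multivalued, possibly logarithmic) solution of $B$ is a local solution of $L$, so $\mathrm{Sol}_a(B)\subseteq\mathrm{Sol}_a(L)$. This inclusion is compatible with local monodromy, since monodromy acts on $\mathrm{Sol}_a(L)$ by analytic continuation and preserves $\mathrm{Sol}_a(B)$. Hence $\mathrm{Sol}_a(B)$ is a monodromy-invariant subspace, and its Jordan/logarithmic structure is a substructure of that of $L$. If $a$ is Fuchsian for $L$, all solutions of $L$ have moderate growth, hence so do those of $B$. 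By Fuchs' criterion this makes $a$ a regular singular (or ordinary) point of $B$, and the exponents of $B$ appear among those of $L$.

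For part 2, Fuchsianity of $B$ follows from part 1 applied at every point, including $\infty$ via $w=1/z$. For $A$, we use the quotient: $\mathrm{Sol}(L)/\mathrm{Sol}(B)\cong \mathrm{Sol}(A)$ via the map $f\mapsto Bf$. This map is surjective locally because $Bf=g$ is solvable for any local solution $g$ of $A$, by variation of parameters. Applying $B$ to solutions of moderate growth gives functions of moderate growth, since the coefficients of $B$ are rational and derivatives of moderate-growth functions in sectors remain of moderate growth. So $A$ is regular singular everywhere. An alternative is to invoke the classical fact that the category of regular singular (Fuchsian) modules is closed under subquotients.

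For part 3, MUM at $0$ means $\mathrm{Sol}_0(L)$ is a single Jordan block under monodromy, with all exponents $0$. Its invariant subspaces form a chain, and each is spanned by the solutions of logarithmic degree $<k$. By part 1, $\mathrm{Sol}_0(B)$ is the invariant subspace of dimension $\operatorname{ord}B$, so it is a single Jordan block with exponents $0$, i.e.\ $B$ is MUM. Every nonzero invariant subspace contains the one-dimensional one spanned by the log-free solution $F$, so $BF=0$.

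For part 4, if $L=AB$ is a nontrivial factorization, part 3 gives $BF=0$ with $\operatorname{ord}B<\operatorname{ord}L$, so $L$ is not minimal. Conversely, suppose $M\neq 0$ of lower order annihilates $F$. We take the minimal operator $M_0$ of $F$, namely the monic generator of the left ideal $\{P: PF=0\}$. Since $LF=0$, we get $L=AM_0$ by right division: writing $L=AM_0+R$ with $\operatorname{ord}R<\operatorname{ord}M_0$ forces $RF=0$, hence $R=0$. This is a nontrivial factorization, so $L$ is reducible.

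Part 5 is the step needing the most care. Let $b$ be a finite singularity of $B$ that is not a singularity of $L$. Near $b$, all solutions of $L$ are holomorphic, so by part 1 at the ordinary point $b$ (the same inclusion argument works) all solutions of $B$ are holomorphic at $b$. Thus $b$ is apparent. The local exponents are then the orders of vanishing of a basis of holomorphic solutions. By Gaussian elimination we can choose the basis so these orders are distinct, giving strictly increasing nonnegative integers. The only subtlety is that $b$ is a genuine singularity of $B$ while holomorphic solutions still span the $\operatorname{ord}B$-dimensional space; this is exactly the meaning of ``apparent,'' so nothing further is required.
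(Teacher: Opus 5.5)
Your proof is correct. For Parts 1, 3 and 5 it follows the paper's argument with more detail: inclusion of local solution spaces, inheritance of the full Jordan block, and holomorphy of $B$-solutions at ordinary points of $L$. In Part 3 you get $BF=0$ from the chain of monodromy-invariant subspaces of a single unipotent block. The paper instead takes the normalized power-series solution $F_B$ of the MUM operator $B$ and observes that $LF_B=0$ forces $F_B=F$; the two arguments are equivalent.

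The real divergence is the Fuchsianity of $A$ in Part 2. The paper dualizes: $L^*=B^*A^*$ makes $A^*$ a right factor of $L^*$, so the right-factor case applies. This relies on the classical fact, not proved there, that formal adjoints preserve Fuchsianity. You instead realize $\mathrm{Sol}(A)$ on small sectors as $B(\mathrm{Sol}(L))$, which is surjective by the count $\operatorname{ord}L-\operatorname{ord}B=\operatorname{ord}A$. You then use that an operator with rational coefficients preserves moderate growth, and conclude by Fuchs' criterion. Your route is self-contained and avoids adjoints entirely; the paper's is shorter and purely algebraic.

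In Part 4 you also supply the direction the paper only asserts (irreducible implies minimal). You do this via the generator $M_0$ of the annihilating left ideal of $F$ and right division $L=AM_0+R$, where $RF=0$ forces $R=0$. That is the standard justification and worth keeping.

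One point both you and the paper assert rather than derive is the containment of \emph{exponents} in Part 1, and the MUM conclusion of Part 3 depends on it. It follows because the leading term $z^{\rho}p(\log z)$ of any local solution is killed by the indicial operator. Hence the valuation-graded pieces of a Fuchsian solution space have dimensions equal to the exponent multiplicities, and $\mathrm{Sol}_a(B)\subseteq\mathrm{Sol}_a(L)$ injects the graded pieces for $B$ into those for $L$.
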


\begin{proof}
Part~1 is obvious; Part~2 for B follows from Part~1, and for A one can use the adjoints. (See Section~\ref{sec-4-SA-ASA}.) For Part~3, using Part~1, the solution space for $B$ must be a Jordan logarithmic block, so the symbol is $\{0,0, \cdots, 0\}$, so MUM. If $F_B$ is the unique power series at $z=0$, with $B(F_B)=0$ and $F_B(0)=1$, then $L(F_B)=0$, which means that $F_B$ must be the unique normalized analytic solution of $L$ a $z=0$. Part~4 follows from Part~3, since irreducibility implies minimality, and in the other direction, if $L$ were reducible, Part~3 shows it would not be minimal. For Part~5, since the solution space of $B$ at $a$ is included in the solution space of $L$ at $a$, if follows that, at any ordinary point of $L$, $B$ must have only analytic solutions. Hence any such points are apparent singularities of $B$ and then the general structure theory shows that the local exponents must be distinct non-negative integers.
\end{proof}

\noindent
\begin{remark} \label{R:MUM-lemma} 
In Part~4 of this lemma, the MUM condition is essential because, in general, if $L$ is minimal for some analytic function $F$ it does not follow that $L$ is irreducible. For example, $F(z)=-\log(1-z)=\sum_{n\geq 1} z^n/n$ is annihilated by $L =(1-z) D^2 - D$, which is reducible of order 2 and it is easily seen that $F$ cannot be annihilated by a order 1 operator. So $L$ is minimal for $F$ but reducible.
\end{remark}

\section{Framework dictionary}\label{sec-3-framework}

We assume that all the finite singularities are Fuchsian, and we will explicitly specify when we require this condition at $\infty$.

Let $L$ be a differential operator of order $q$ in CPF with a Fuchsian singularity at a finite point $z=a$, which can be assumed to be 0 by a linear change of variable. By abusing the notation, we allow $a=0$ to also be an ordinary point, which will be used for the case $q=1$. Recall from Section~\ref{sec-2-generalities} that $L=\sum_{k=0}^{q} P_k(z) D^k$ and we set $P_q(z)=z^m \widetilde{P}_q(z)$, $\widetilde{P}_q(0)\neq 0$, and $0\leq m \leq q$ is the order of the singularity.

We introduce the based homogeneous differential operator $\theta = zD$, which satisfies $\theta z^k = k z^k$, $\theta z=z(\theta +1)$, and $\theta_w=-\theta_z$, where $w=1/z$. We notice that

$$
z^k D^k = (\theta)_k = \theta(\theta-1)\cdots (\theta -k+1).
$$
Hence
\begin{equation}
L_{\theta}:=z^{q-m}L= \sum_{k=0}^{q} \widetilde{R}_k(z) \, \theta^k,
\text{ where }
\widetilde{R}_k(z)=\sum_{n=k}^{q} c_{k,n} z^{q-n-m} P_n(z),
\end{equation}
with $c_{k,n}$ non-zero integers. We have $\widetilde{R}_q(z)=\widetilde{P}_q(z)$. The Fuchsian condition means that all $z^{q-n-m} P_n(z)$ are polynomials, so $L_\theta$ has polynomial coefficients and $q-m$ is the lowest power $n$ such that $z^n L$ has polynomial coefficients. It also follows that all $\widetilde{R}_k$'s have no non-constant factors, because $\widetilde{R}_q$ does not divide by $z$, while, for $\alpha \neq 0$, if $z-\alpha$ divided all $\widetilde{R}_k$, it would follow inductively that $z-\alpha$ divides all $P_k$'s, contradicting that $L$ is in CPF. We call $L_\theta$ the {\it canonical $\theta$-polynomial form} of $L$. Conversely, if we are given an operator $L_\theta$ in canonical $\theta$-polynomial form, the Fuchsian condition means that $\widetilde{R}_q(0)\neq 0$, the indicial equation is $\sum_{k=0}^q \widetilde{R}_k(0) \lambda^k = 0$, and is an unique $0\leq m \leq q$, the order of singularity, for which $z^{m-q} \, L_\theta$, transformed in an ODE in standard form, is in CPF.

We denote by $r$ the highest degree of the polynomials $\widetilde{R}_k$ and we call it the {\it width of $L_\theta$} (and of $L$, by abuse of notation). By identifying coefficients, we can write
\begin{equation}\label{eqn-Rs}
L_\theta =\sum_{k=0}^r z^k R_k(\theta).
\end{equation}
The indicial equation is $R_0(\lambda)=0$, hence the MUM condition is $R_0(\theta)=\theta^q$.

For further reference, we note that, if we transform $\infty$ to $0$ with $w=1/z$ , the corresponding form of $L_\theta$ is
$$
L_{\theta,w}=\sum_{k=0}^q \widetilde{R}_k^t(w) (-\theta_w)^k,
\text{ where } \widetilde{R}_k^t(w) = w^r \widetilde{R}_k(1/w).
$$
Hence the Fuchsian condition at $\infty$ of $L$ and $L_\theta$ ($\widetilde{R}_q^t(0)\neq 0$) is equivalent with $r=\text{degree}(\widetilde{R}_q)=\text{degree}(\widetilde{P}_q)$.

From now on, we assume that $L$ has at least one non-trivial analytic solution $F(z)=\sum_{n=0}^\infty x_n z^n$ at $z=0$. Then the equation $L_\theta F=0$ reads
\begin{equation}
\sum_{n=0}^\infty \sum_{k=0}^r  R_k(n) \, x_n \, z^{n+k} = 0.
\end{equation}
In particular, for $n\geq 0$, we get the recurrence
\begin{equation}
\sum_{k=0}^r  R_{r-k}(n+k) \, x_{n+k}= 0,
\end{equation}
with the initial conditions
\begin{equation}\label{rec-R-ic}
\sum_{k=0}^m  R_{m-k}(k) \, x_{k}= 0,
\text{ for } 0\leq m\leq r-1.
\end{equation}

We denote by $Q_k(t)=R_{r-k}(t)$ so the recurrence becomes
\begin{equation}\label{rec0}
\sum_{k=0}^r  Q_{k}(n+k) \, x_{n+k}= 0,
\end{equation}
with the initial conditions
\begin{equation}\label{rec0-ic}
\sum_{k=0}^m  Q_{r-m+k}(k) \, x_{k}= 0,
\text{ for } 0\leq m\leq r-1.
\end{equation}
We note that the $\text{degree}(R_0)=\text{degree}(Q_r)=q$, $\text{degree}(Q_k)\leq q$, for $0\leq k \leq r-1$, and $Q_r(\lambda)=0$ is the indicial equation at 0. The Fuchsianity at $\infty$ is equivalent to $\text{degree}(Q_0)=q$ and the indicial equation at $\infty$ is $Q_0(-\lambda)=0$.

The condition that the recurrence admits a non-trivial solution becomes $Q_0$ has at least one non-negative integer root.

We note that if $z=a$ is a finite singularity of $L$ of order $m_a$ then the width at $a$ is $r_a=\text{degree}(P_q)-m_a$. In particular, if $L$ is Fuchsian at $\infty$, then $\text{degree}(P_q)\geq q$, so $r_a\geq q-m_a$.

Conversely, if we start with a recurrence of type (\ref{rec0}) with $\deg(Q_r)=q\geq \deg(Q_k)$, for all $k$, and such that $Q_r$ has a non-negative integral root, we obtain a differential operator $L$, Fuchsian at 0, which has at least one non-trivial analytic solution $F(z)=\sum_n x_n z^n$ for which $(x_n)_n$ satisfies both (\ref{rec0}) and (\ref{rec0-ic}). $L$ is given by:
\begin{equation}
L_\theta =\sum_{k=0}^r z^{r-k} Q_k(\theta).
\end{equation}

Given any of the data $(P,\tilde{R},R,Q)$ we can uniquely obtain the others, with the caveat that $Q_r$ has a non-negative integral root, so we have a non-trivial solution for (\ref{rec0}) and (\ref{rec0-ic}).

\vspace{.2cm}
\begin{figure}[h]
%

\begin{framed}

\begin{center}

\begin{tikzpicture}
\node (P) at (0,0) {$P$};
\node (Rt) at (3,0) {$\widetilde{R}$};
\node (R) at (6,0) {$R$};
\node (Q) at (9,0) {$Q$};

\draw[<->] (P) -- (Rt);
\draw[<->] (Rt) -- (R);
\draw[<->] (R) -- (Q);

\node (L) at (0,-2) {$L$};
\node (Lt) at (4.4,-2) {$L_\theta$};
\node (F) at (9,-2) {\text{if $F$ exists}};

\draw[->] (L) -- (P);
\draw[->] (Lt) -- (4.4,-1.1);
\draw[->] (F) -- (Q);

\draw[decorate,decoration={brace,mirror,amplitude=8pt}] (2.8,-0.5) -- (6.3,-0.5);
\end{tikzpicture}

\end{center}
\caption{Framework essence}

\end{framed}
\end{figure}

\noindent
\begin{remark}\label{R:spc} 
In our framework, the finite non-zero singularities are the solutions of $\widetilde{R}_q(z)=0$, and $\widetilde{R}_q(z)=\sum_{k=0}^r [\theta^q]R_k \, z^k$, where $[\theta^q]R_k$ is the $\theta^q$-coefficient of $R_k$.
\end{remark}

\vspace{.2cm}
\begin{lemma}\label{L-structure}
Let $L \in \mathbb{C}(z)\langle D\rangle$ be of order $q\geq 3$ in canonical polynomial form with a simple singularity at $z=a$ with symbol $(0, 1, 2\dots, q-2,\alpha)$. Then if $\alpha$ is not an integer greater or equal to $q-1$, then $L$ has precisely $q-1$ analytic solutions at $z=a$.
\end{lemma}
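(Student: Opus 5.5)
We shift the singular point to $a=0$ by a translation, so that $P_q(z)=z\,\widetilde{P}_q(z)$ with $\widetilde{P}_q(0)\neq 0$. The plan is to work directly with the Taylor coefficients of a putative analytic solution $y=\sum_{n\ge 0} a_n z^n$ in the canonical polynomial form $L=\sum_{j=0}^q P_j D^j$. We will show that $a_0,\dots,a_{q-2}$ can be chosen freely and that every later coefficient is then forced. This gives a $(q-1)$-dimensional space of formal solutions, and a convergence argument upgrades it to analytic solutions.

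First we express $\alpha$ in terms of the coefficients. By Fuchsianity at $0$ and Lemma~\ref{L-symbols} (case $m=1$), $P_{q-1}(z)/P_q(z)$ has at most a simple pole. The indicial equation is $(\lambda)_q+c_{q-1}(\lambda)_{q-1}=0$ with $c_{q-1}=P_{q-1}(0)/\widetilde{P}_q(0)$, so $\alpha=q-1-c_{q-1}$.

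Next we compute the lowest-order contribution of each monomial. We have $zD^q z^n=(n)_q z^{n-q+1}$, and $P_{q-1}D^{q-1}z^n=P_{q-1}(0)(n)_{q-1}z^{n-q+1}+(\text{higher powers})$. For $j\le q-2$, the term $P_jD^jz^n$ involves only powers $z^{n-j+i}$ with $i\ge 0$; since $n-j\ge n-q+2$, all of these exponents are strictly larger than $n-q+1$. Moreover, each term vanishes identically whenever $n<j$, so no negative powers of $z$ ever appear.

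Now we extract the recurrence. Collecting the coefficient of $z^{N}$ in $Ly$, with $N\ge 0$ and $n=N+q-1$, gives
\[
\widetilde{P}_q(0)\,(n)_{q-1}\bigl(n-q+1+c_{q-1}\bigr)a_n=\widetilde{P}_q(0)\,(n)_{q-1}(n-\alpha)\,a_n=\Phi_n(a_0,\dots,a_{n-1}),
\]
where $\Phi_n$ is linear in the earlier coefficients. Here $a_n$ is the highest-index coefficient entering that equation, because every other contribution comes from $a_m$ with $m<n$. These equations are the only conditions on the $a_n$, and they are indexed exactly by $n\ge q-1$. For such $n$ we have $(n)_{q-1}\neq 0$. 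By hypothesis, $\alpha$ is not an integer $\ge q-1$, so $n-\alpha\neq 0$ as well.

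Consequently, each $a_n$ with $n\ge q-1$ is uniquely determined by $a_0,\dots,a_{n-1}$, while $a_0,\dots,a_{q-2}$ are unconstrained. The map $y\mapsto(a_0,\dots,a_{q-2})$ is therefore an isomorphism from the space of formal power series solutions onto $\C^{q-1}$.

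The remaining step, which we expect to be the only non-routine point, is convergence: we must show that each formal solution actually defines an analytic function. Here we invoke the classical fact that at a regular (Fuchsian) singular point every formal power series solution converges, with radius at least the distance to the nearest other singularity (see \cite{Ince}). Alternatively, one can run a direct majorant estimate on the recurrence above, using $|(n)_{q-1}(n-\alpha)|\ge c\,n^{q}$ for large $n$. This yields exactly $q-1$ linearly independent analytic solutions at $z=a$. There cannot be more, since any analytic solution is in particular a formal one.
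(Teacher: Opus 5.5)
Your proof is correct and is essentially the paper's argument. Both derive the coefficient recurrence whose leading factor is, up to the nonzero constant $\widetilde{P}_q(0)$, equal to $(n)_{q-1}(n-\alpha)$; both leave $a_0,\dots,a_{q-2}$ free and solve uniquely for $a_n$ when $n\ge q-1$.

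The differences are organizational. You work directly with $L$ in $D$-form and observe that no negative powers of $z$ arise. This replaces the paper's passage to $L_\theta=z^{q-1}L$, its divisibility of $R_j$ by $(\theta)_{q-1-j}$, and its case split on the width $r$. You also prove convergence of the formal solutions explicitly via the classical regular-singular-point theorem, a step the paper leaves implicit.
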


\noindent
\begin{remark}
The condition is necessary because at an ordinary point with $q$ analytic solutions $\alpha=q-1$ and one can construct similar examples for any $\alpha=q-1+n$. If $\alpha$ is not integral then we have a solution of the type $z^\alpha \cdot \text{analytic}$, while if $\alpha\in\{0,1,\cdots, q-2\}$ we have logarithmic solutions.
\end{remark}

\begin{proof}
By a linear change of variable, we assume $a=0$. Let $L=\sum_{k=0}^q P_k(z) D^k$, $P_q=z\, \widetilde{P}_q$, and write $P_k(z)=\sum_{m\geq 0} p_{k,m}z^m$. Since $z^k D^k=(\theta)_k$, the canonical $\theta$-form is
$$
L_\theta=z^{q-1}L=\sum_{k=0}^q z^{q-1-k} P_k(z) \, (\theta)_k
= \sum_{k=0}^q \sum_{m\geq 0} p_{k,m} z^{m+q-1-k} \, (\theta)_k.
$$
Hence the coefficient $R_j(\theta)$ of $z^j$ is
$$
R_j(\theta) = \sum_{m+q-1-k=j} p_{k,m} (\theta)_k.
$$
We also have $R_0(\theta)=(\theta - \alpha) (\theta)_{q-1} $, since the indicial equation is given by $R_0(\lambda)=0$.

Since we sum on $m\geq 0$, we notice that, for $j\leq q-1$, $R_j(\theta)=\sum_{k=q-1-j}^q c_{j,k} (\theta)_k$, for suitable constants $c_{j,k}$. So $R_j$ is divisible by $(\theta)_{q-1-j}$, which is non-trivial precisely for $j\leq q-2$.

Assume for now that the width satisfies $r\geq q-1$ (which is the case if $L$ is Fuchsian at $\infty$, so $\deg(P_q)\geq q$, so $r=\deg(\widetilde{P}_q)\geq q-1$). Then for $j\leq q-2\leq r-1$, $(\theta)_{q-1-j}$ divides $R_{j}(\theta)$, hence $R_j(m)=0$ for $m=0, 1, \dots, q-2-j$. This means that $R_{j-k}(k)=0$, for all $0\leq j \leq q-2$ and $0\leq k \leq j$. So the first $(q-2)$ initial conditions in (\ref{rec-R-ic}) are all automatically satisfied by any $(x_0, x_1, \cdots, x_{q-2})$.

Now $x_{q-1}$ satisfies $R_0(q-1) x_{q-1} + \text{expression}(x_0, x_1, \cdots, x_{q-2})=0$. Since $R_0(q-1)=(q-1-\alpha) (q-1)!\neq 0$, $x_{q-1}$ is uniquely determined by the first $(q-1)$ terms. Similarly, for any $n\geq q-1$, $x_n$ will satisfy $R_0(n) x_{n} + \text{expression}(x_0, x_1, \cdots, x_{n-1})=0$ (where the expression depends actually only on the previous $r$ terms). Since $R_0(n)=(n-\alpha) (n)_{q-1}\neq 0$ by our assumption on $\alpha$, $x_{n}$ is uniquely determined by the previous terms.

Because we can take $x_0, x_1, \cdots, x_{q-2}$ to be arbitrary, the space of analytic solutions has dimension at least $(q-1)$ and by the remark above is exactly $(q-1)$.

If the width satisfies $r\leq q-2$ then the proof above shows that all initial conditions and the first full conditions up to $(q-2)$ are satisfied by all the choices of $x_0, x_1, \cdots, x_{q-2}$, since all the involved coefficients are all zero. The rest of the proof follows as above.
\end{proof}

\section{Self-adjointness and symmetries}\label{sec-4-SA-ASA}
In $\mathbb{C}(z)\langle D\rangle$, we define the {\it the formal adjoint} by the rules $D^*=-D$, $z^*=z$, and $\adj{(AB)}=B^* A^*$. It follows that
$$
\adj{(P_k(z) D^k)}=(-D)^k P_k(z) = (-1)^k P_k(z) D^k + \text{ lower order terms}.
$$
Hence if $L=\sum_{k=0}^q P_k(z) D^k$ one gets
$$
\adj{L}=\sum_{k=0}^q P_k^{\sharp}(z) D^k, \text{ where }
P_q^{\sharp}=(-1)^q \, P_q.
$$
We call $L$ {\it formally self-adjoint} if $\adj{L}=(-1)^q \, L$. We notice that this depends on the specific form of $L$ and is not an invariant of the differential module induced by $L$. However, there is a wider definition of self-adjointness at the differential module level involving a non-degenerate bilinear form invariant under monodromy on the local spaces of solutions, but we are not going to use that.

We notice that $\adj{\theta}=-\gt-1$ and, as operators, $\gt z^j = z^j (\gt +j)$. Hence, if $L_\gt=\sum_{k=0}^r z^k R_k(\gt)$, then
$\adj{L_\gt}=\sum_{k=0}^r z^k R_k(-\gt-k-1)$.

\begin{lemma}\label{L-SA-symmetry}
If $L=\sum_{k=0}^q P_k(z) D^k$ is in CPF and $z=0$ is a singularity of order $m$, so $L_\gt=z^{q-m}\, L$. Then $L$ is self-adjoint if and only if

\begin{equation}\label{eq-f-symmetry} 
R_k(-\gt-k-1)=(-1)^q R_k(\gt+q-m), \text{ for all } 0\leq k \leq r.
\end{equation}
\end{lemma}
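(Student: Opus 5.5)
The plan is to transport the condition $\adj{L}=(-1)^q L$ to the $\gt$-form and then compare coefficients of powers of $z$. Since $L_\gt=z^{q-m}L$ and the adjoint reverses products with $\adj{z}=z$, we get
$$
\adj{L_\gt}=\adj{L}\,z^{q-m}.
$$
Thus $L$ is self-adjoint if and only if $\adj{L_\gt}=(-1)^q L\,z^{q-m}$. Multiplying on the left by the invertible element $z^{q-m}$ of $\mathbb{C}(z)\langle D\rangle$ shows this is equivalent to
$$
\adj{L_\gt}=(-1)^q z^{q-m}L\,z^{q-m}z^{-(q-m)}\cdot z^{q-m}=(-1)^q L_\gt\, z^{q-m}.
$$
So the statement reduces to the operator identity
$$
\adj{L_\gt}=(-1)^q L_\gt\, z^{q-m}. \qquad (\ast)
$$
Note also that $z^{q-m}$ on the right is not cancelled; the derivation only uses $\adj{L}=\adj{L_\gt}\,z^{-(q-m)}$.

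Next, both sides of $(\ast)$ are written in the normal form $\sum_k z^k(\text{polynomial in }\gt)$. For the left side I use the formula already derived in this section, $\adj{L_\gt}=\sum_{k} z^k R_k(-\gt-k-1)$, which comes from $\adj{\gt}=-\gt-1$ and $\gt z^j=z^j(\gt+j)$. For the right side, the commutation rule gives $R(\gt)z^{j}=z^{j}R(\gt+j)$. Hence
$$
L_\gt z^{q-m}=\sum_k z^k R_k(\gt)z^{q-m}=z^{q-m}\sum_k z^k R_k(\gt+q-m).
$$

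The key point is a shift in the power of $z$. The left side of $(\ast)$ has lowest $z$-power $0$, with coefficient $R_0(-\gt-1)\neq0$, while the right side carries an extra factor $z^{q-m}$. To avoid a mismatch, I would instead compare $\adj{L}$ with $L$ directly:
$$
\adj{L}=\adj{L_\gt}\,z^{-(q-m)} \quad\text{and}\quad L=z^{-(q-m)}L_\gt.
$$
Commuting $z^{-(q-m)}$ to the left in the first gives
$$
\adj{L}=z^{-(q-m)}\sum_k z^k R_k(-\gt-k-1+(q-m))\quad\text{(up to the direction of the shift)}.
$$
The condition $\adj{L}=(-1)^qL$ then becomes
$$
\sum_k z^kR_k(-\gt-k-1-(q-m))=(-1)^q\sum_k z^kR_k(\gt).
$$
Replacing $\gt$ by $\gt+q-m$ yields exactly (\ref{eq-f-symmetry}).

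The final step is uniqueness of coefficients. Every element of $\mathbb{C}[z]\langle\gt\rangle$ has a unique expansion $\sum_k z^kS_k(\gt)$, since $z^k\gt^j$ are linearly independent as operators on monomials. Therefore equality of operators is equivalent to equality of each $\gt$-polynomial coefficient, for $0\le k\le r$. This gives both directions of the equivalence at once.

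The main obstacle is bookkeeping the sign and direction of the shift when $z^{\pm(q-m)}$ is moved past $R_k(\gt)$. I would check it on a toy case, for example $L=zD^2+D$ with $q=2$, $m=1$, which is self-adjoint.
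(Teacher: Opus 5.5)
Your strategy is the paper's: write both sides in the normal form $\sum_k z^kS_k(\gt)$ using $\adj{\gt}=-\gt-1$ and $f(\gt)z^j=z^jf(\gt+j)$, then compare coefficients. However, two steps as written are wrong, and they sit exactly where the content of the lemma lies.

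First, $(\ast)$ is not a reformulation of self-adjointness. Left-multiplying $\adj{L_\gt}=(-1)^qLz^{q-m}$ by $z^{q-m}$ gives $z^{q-m}\adj{L_\gt}=(-1)^qL_\gt z^{q-m}$, not $(\ast)$. Since $L=z^{m-q}L_\gt$, the correct condition is $\adj{L_\gt}=(-1)^qz^{m-q}L_\gt z^{q-m}$. This is what the paper uses, together with $z^{m-q}L_\gt z^{q-m}=\sum_k z^kR_k(\gt+q-m)$. The ``mismatch'' you noticed is a symptom of this error, not a bookkeeping nuisance: it shows that $(\ast)$ can never hold when $q>m$. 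Your own toy case confirms this: for $L=zD^2+D$ one has $L_\gt=\gt^2$ and $\adj{L_\gt}=(\gt+1)^2$, whereas $L_\gt z=z(\gt+1)^2$.

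Second, in the direct comparison you leave the direction of the shift undetermined and then write it inconsistently. Your display $\adj{L}=z^{-(q-m)}\sum_k z^kR_k(-\gt-k-1+(q-m))$ is correct: apply $f(\gt)z^j=z^jf(\gt+j)$ with $j=m-q$. But the condition you derive from it carries $-(q-m)$. From that equation, the substitution $\gt\mapsto\gt+q-m$ gives $R_k(-\gt-k-1-2(q-m))=(-1)^qR_k(\gt+q-m)$. The substitution that actually removes the shift gives $R_k(-\gt-k-1)=(-1)^qR_k(\gt-q+m)$, which is false for $L=zD^2+D$ (it would require $(\gt+1)^2=(\gt-1)^2$). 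Since the lemma is precisely a statement about this shift, ``up to the direction of the shift'' leaves the proof incomplete.

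The repair is short. With the correct sign, the condition reads $R_k(-\gt-k-1+q-m)=(-1)^qR_k(\gt)$ for all $k$, and $\gt\mapsto\gt+q-m$ turns it into $R_k(-\gt-k-1)=(-1)^qR_k(\gt+q-m)$. With these corrections your argument coincides with the paper's. The only difference is that you conjugate on the other side, moving $z^{m-q}$ through $\adj{L_\gt}$ rather than $z^{q-m}$ through $L_\gt$, which costs you the extra substitution at the end.
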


\noindent
\begin{remark}
We can express this as the fact that $R_k$ are polynomials with center of symmetry at $(q-m-1)/2-k/2$ and are even or odd with respect to this center of symmetry depending on the parity of the order $q$. Since $Q_k=R_{r-k}$ this is also equivalent to the following symmetry of the recurrence polynomials:

\begin{equation}\label{eq-r-symmetry} 
Q_k(-\gt-r+k)=(-1)^q Q_k(\gt+q-m-1), \text{ for all } 0\leq k \leq r.
\end{equation}
\end{remark}

\vspace{.2cm}
\begin{proof}
We notice that $z^{m-q} \, L_\gt \, z^{q-m}= \sum_{k=0}^r z^k \, R_k(\gt+q-m)$. Since $L_\gt=z^{q-m} L$ it follows that $\adj{L}=(-1)^q L$ if and only if
$\adj{L_\gt}=(-1)^q z^{m-q} \, L_\gt \, z^{q-m}$.
Using the formulas above for the two sides and identifying the coefficients of $z^k$, we obtain the required identity.
\end{proof}

For us the two important cases will be $m=q-1$ and $m=1$. If $m=q-1$ the symmetries become
$$
R_k(-\gt-k)=(-1)^q R_k(\gt) \text{ and }
Q_{k}(-\gt-r+k)=(-1)^q Q_{k}(\gt), \text{ for all } 0\leq k \leq r.
$$

\vspace{.2cm}
If we denote by \(^{\ddagger}\) the {\it anti-involution} of \(\mathbf C[t] \langle \theta \rangle \) defined by
\begin{equation}\label{eq-anti-involution}
    t^{\ddagger}=t,\qquad \theta^{\ddagger}=-\theta,\qquad (AB)^{\ddagger}=B^{\ddagger}A^{\ddagger},
\end{equation}
we notice that
$L_\gt^{\ddagger}=\sum_{k=0}^r z^k R_k(-\gt-k)$. So the $R_k$-symmetries above are equivalent to $L_\gt^{\ddagger}=(-1)^q L_\gt$. Hence  $\adj{L}=(-1)^q L$ if and only if  $L_\gt^{\ddagger}=(-1)^q L_\gt$. We also notice that $Q_r$ (or $R_0$) is symmetric with respect to $0$ (that is, even or odd, depending on the parity of $q$) and $Q_0$ will be symmetric with respect to $-r/2=-(\text{degree}(P_q)-q+1)/2$, which is an invariant of $L$, as expected, since $Q_0$ does not depend on which finite non-zero singularity we base the analysis at.

In the case $m=1$, $Q_r$ is symmetric around $-(q-2)/2$. So if $z=a$ is a simple singularity, then, when we translate it to 0, the symbols of $a$ which are the roots of the corresponding $Q_r$ are invariant under the transform $\lambda \mapsto q-2-\lambda$.

Using Lemma \ref{L-symbols}, part 3, it follows that if $L$ is formally self-adjoint of order $q$ every finite simple singularity has symbol $\{0, 1, \dots, q-2, (q-2)/2\}.$


\vspace{.5cm}
\begin{lemma}\label{l-SA-factorization}
Let $L \in \mathbb{C}(z)\langle D\rangle$ be a linear differential operator and suppose $L = A B$ is a factorization.
 If $L$ is self-adjoint of order $q$, then in any nontrivial factorization $L=AB$ one may assume that $B$ has order at most
$\displaystyle{
\left\lfloor \frac{q}{2} \right\rfloor.
}$
\end{lemma}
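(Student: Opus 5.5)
The idea is to use the formal adjoint to swap the two factors whenever the right factor is too large. Suppose $L=AB$ is a nontrivial factorization, with $\text{order}(A)=a\geq 1$, $\text{order}(B)=b\geq 1$ and $a+b=q$. If $b\leq \lfloor q/2\rfloor$ there is nothing to prove. Otherwise $b>q/2$, so $a<q/2$, and in particular $a\leq \lfloor q/2\rfloor$.

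Apply the anti-involution $^*$ defined at the start of Section~\ref{sec-4-SA-ASA} to the factorization. Since $(AB)^*=B^*A^*$ and $L$ is formally self-adjoint,
$$
(-1)^q L = \adj{L} = \adj{B}\,\adj{A},
\qquad\text{so}\qquad
L = \bigl((-1)^q \adj{B}\bigr)\,\adj{A}.
$$
We need to check that this is again a factorization in $\mathbb{C}(z)\langle D\rangle$ with the right orders.

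\begin{enumerate}
\item The adjoint of an operator with rational coefficients still has rational coefficients. This follows by expanding $(-D)^k f(z)$ with the Leibniz rule $D f = f D + f'$.
\item The adjoint preserves order. The leading coefficient of $\adj{(\sum f_k D^k)}$ is $(-1)^k f_k$ for the top index $k$, exactly as in the computation of $P_q^\sharp$ at the start of Section~\ref{sec-4-SA-ASA}.
\end{enumerate}

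Hence $(-1)^q\adj{B}$ has order $b$, the new right factor $\adj{A}$ has order $a\leq\lfloor q/2\rfloor$, and both orders are $\geq 1$, so the new factorization is still nontrivial. The one structural point to state explicitly is that $^*$ is a well-defined anti-automorphism of the whole ring $\mathbb{C}(z)\langle D\rangle$, not only on polynomial coefficients. It extends uniquely because it respects the defining relation $Dz-zD=1$: indeed $(Dz-zD)^*=z(-D)-(-D)z=Dz-zD=1$. It is also involutive, and for $f\in\mathbb{C}(z)$ we have $f^*=f$, since $f^*$ is forced by $(f\cdot f^{-1})^*=1$.

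Finally, if the irreducibility context requires it, reducibility in the differential-module sense is symmetric under this swap. $L$ has a right factor of order $b$ if and only if it has a right factor of order $q-b$, via $\adj{A}$. No real obstacle is expected; the only care needed is the order and leading-coefficient bookkeeping above.
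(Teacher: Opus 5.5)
Your proof is correct and is essentially the paper's argument: since $\text{order}(A)+\text{order}(B)=q$, one of the two factors has order at most $\lfloor q/2\rfloor$, and self-adjointness gives $L=(-1)^q B^*A^*$ with right factor $A^*$ of the same order as $A$, so the smaller factor can always be placed on the right. Your additional checks (that ${}^*$ is a well-defined, order-preserving anti-involution of $\mathbb{C}(z)\langle D\rangle$ that fixes $\mathbb{C}(z)$) are details the paper takes for granted.
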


\begin{proof}
We have $\text{order}(A)+\text{order}(B)=q$, so the minimum of the two orders is at most $\lfloor \frac{q}{2} \rfloor$. Since $L$ is self-adjoint, $L=(-1)^q \adj{B} \adj{A}$ while the minimum of the order of $B$ or $\adj{A}$ is at most $\lfloor \frac{q}{2} \rfloor$ by the above.
\end{proof}

\section{The main results}\label{sec-Main}

Recall that we are interested in the sequences
$$
A_{n}^{(d)} =\sum_{n_1+\dots+n_d=n} \frac{(2n)!}{(n_1!)^2 (n_2!)^2 \dots (n_d!)^2}
\quad \text{ and } \quad
x_{n}^{(d)} = \frac{A_{n}^{(d)}}{\binom{2n}{n}} = \sum_{n_1+\dots+n_d=n} \frac{(n!)^2}{(n_1!\, n_2! \dots n_d!)^2}.
$$

\vspace{.2cm}
\begin{theorem} \label{T-LF-properties}
We denote by
$\displaystyle{
F_d(z)=\sum_{n \ge 0} x_n^{(d)} \, z^n
}$
the generating function of the sequence $(x_n^{(d)})_n$. For $d\ge 3$, we have:
\begin{enumerate}
\item $F_d(z)$ is annihilated by a differential operator
$L_{d-1}=L_{d-1,F}\in \mathbb{C}(z)\langle D\rangle$  of order $d-1$.

\item $L_{d-1}$ is Fuchsian and has maximally unipotent monodromy (MUM) at $z=0$.

\item The canonical polynomial form of $L_{d-1}$ is formally self-adjoint.

\item The finite singularities of $L_{d-1}$ away from $0$ are all simple and form the set
\[
\left\{ \frac{1}{k^2} \ : \ k\in \mathbb{Z}_{\ge 1},\ k \equiv d \pmod{2} \right\},
\]
so there are $\lfloor \frac{d+1}{2} \rfloor$ such.

\item At each nonzero finite singularity $z=\frac{1}{k^2}$ the local exponents are
\[
0,1,2,\dots,d-3, \frac{d-3}{2}.
\]

\item At infinity, the local exponents are
\[
\begin{cases}
\{1,1,2,2,\dots,n,n\}, & \text{ if } d=2n+1 \ \text{(odd)},\\[4pt]
\{1,1,2,2,\dots,(n-1),(n-1),n/2\}, & \text{ if } d=2n \ \text{(even)}.
\end{cases}
\]

\item At each finite nonzero singularity there are $d-2$ linearly independent analytic solutions.

\item $F_d(z)$ is the distinguished analytic solution at $z=0$ and
is singular at every nonzero finite singularity of $L_{d-1}$ (there is no domain $\Omega$ containing $0$ and some $1/k^2$ and an analytic $\tilde{F}_d$ extending $F_d$ to $\Omega$).
\end{enumerate}
\end{theorem}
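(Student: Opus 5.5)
The approach is to work through the modified Borel transform $Y_d=Y_1^d$, where $Y_1(t)=\sum_n t^n/(n!)^2=I_0(2\sqrt t)$. The function $Y_1$ is annihilated by the second-order operator $\gt^2-t$ in $t$. Hence $Y_d$ is annihilated by the $d$-th symmetric power of $\gt^2-t$, an operator of order $d+1$ in $\gt$. Its solution space is spanned by $Y_1^{a}K^{d-a}$, where $K$ is the second (logarithmic) solution.

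To get order $d-1$ for $F_d$, I will not use the symmetric power directly. Instead I will write $Y_d$ as a Bessel-type function. With $y_1=I_0(2\sqrt t)$ we have $Y_1(t)=\frac1{2\pi}\int_0^{2\pi}e^{2\sqrt t\cos\phi}d\phi$. Then $Y_d$ is a $d$-fold integral of $e^{2\sqrt t\,\sum\cos\phi_j}$, which reduces to an exponential integral against the density of $|\sum_j e^{i\phi_j}|$. Undoing the Borel transform termwise ($t^n/(n!)^2\mapsto z^n$) turns this into the period integral with $g_x$ from the introduction. There the scalar symmetry $g_x(\lambda\mathbf u)=g_x(\mathbf u)$ lets us integrate out one variable, leaving an integral over a $(d-1)$-dimensional family.

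Concretely, I would prove Part 1 by producing the $P$-recurrence of width $r=\lfloor (d+1)/2\rfloor$ for $x_n^{(d)}$. The recurrence is obtained from the known recurrence for the coefficients of $Y_1^d$ (Bessel moment recurrences, whose coefficient matrix is of Sylvester–Kac tridiagonal type) after the substitution $y_n=x_n/(n!)^2$. I would then pass through the dictionary of Section~\ref{sec-3-framework} to an operator $L_{d-1}$ with $R_0(\gt)=\gt^{d-1}$. This gives Part 2 (MUM) once Fuchsianity at $\infty$ is checked via $\deg Q_0=d-1$.

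For Part 3 I would verify the symmetry \eqref{eq-r-symmetry} with $m=q-1$, namely $Q_k(-\gt-r+k)=(-1)^{d-1}Q_k(\gt)$, directly from the explicit structure of the recurrence. Equivalently, I would show $L_\gt^{\ddagger}=(-1)^{d-1}L_\gt$, using that the Sylvester–Kac eigenvalues are symmetric.

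Parts 4–6 come from Remark~\ref{R:spc}. The polynomial $\widetilde R_q(z)=\sum_k[\gt^{q}]R_k z^k$ is computed from the leading coefficients, which should factor as $\prod_{k\equiv d\ (2)}(1-k^2z)$. The points $k^2$ are the Sylvester–Kac eigenvalues $d,d-2,\dots$ squared, matching the singularities $|\sum e^{i\phi_j}|^2=k^2$ of the period. Simplicity of each root then follows, and self-adjointness plus Lemma~\ref{L-symbols} gives the exponents $0,\dots,d-3,(d-3)/2$. The exponents at $\infty$ are the roots of $Q_0(-\lambda)$. I would compute $Q_0$ explicitly and cross-check it with the Frobenius relation of Lemma~\ref{L-general}: $N=r+2$ singularities, total $\frac{(d-1)(d-2)}2 r$.

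Part 7 follows from Lemma~\ref{L-structure}. When $d$ is even, $(d-3)/2$ is non-integral. When $d$ is odd, $(d-3)/2$ is an integer $\le d-3$, which is not $\ge d-2$. Either way there are exactly $d-2$ analytic solutions.

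Irreducibility, which is implicit in using minimality, follows from Lemma~\ref{L-factorization}(4) and Lemma~\ref{l-SA-factorization}. Any right factor $B$ of order $\le\lfloor (d-1)/2\rfloor$ is MUM at $0$, Fuchsian, and has exponents among those of $L$ at each $1/k^2$ and at $\infty$, plus apparent singularities with nonnegative exponents. Summing, I want to contradict the Frobenius relation for $B$, using that the exponents at $\infty$ are $\ge 1/2$ while those at $0$ are all zero.

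For Part 8, I would use the Hadamard convolution representation from \cite{DS1} inductively to show that $F_d$ has a genuine singularity at each $1/k^2$ reached along suitable paths. The local exponent $(d-3)/2$ should contribute a term $(1-k^2z)^{(d-3)/2}$, or a logarithm when $d$ is odd, with nonzero coefficient.

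The main obstacle is deriving the explicit width-$r$ recurrence and proving the factorization of $\widetilde R_q$, and with it the self-adjoint symmetry, for all $d$. I would attack this first, via the symmetric-power operator of $\gt^2-t$ and the tridiagonal Sylvester–Kac structure of its coefficients.
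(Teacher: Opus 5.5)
You have the right route, and it is the paper's: the Borel transform $Y_d=\phi^d$, the symmetric power of $\theta^2-t$, the substitution $y_n=x_n/(n!)^2$, the dictionary of Section~\ref{sec-3-framework}, Remark~\ref{R:spc}, Lemmas~\ref{L-symbols} and~\ref{L-structure}, and Hadamard convolution for Part~8. But what you set aside as ``the main obstacle'' is the proof itself. Of the pieces you do specify, one is heuristic and one points at a mechanism that fails.

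\textbf{Order $d-1$.} This does not come from counting integration variables in the period integral; the number of variables does not control the order of the annihilating operator. It comes directly from the symmetric-power operator once you know its degree pattern. Write $P_{d,d+1}=\sum_m t^mR^B_m(\theta)$. You need $\deg R^B_m=d+1-2m$ exactly, with leading coefficient $(-1)^me_m(S_d)$. The paper gets this by noting that the top-degree parts satisfy the commutative recursion $U_{n+1}=XU_n-n(d-n+1)\,t\,U_{n-1}$. Hence $U_{d+1}(X,s^2)=\det(XI-B_d(s))$, which factors as $\prod_{k^2\in S_d}(X^2-k^2t)$, times $X$ when $d$ is even. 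This degree pattern is what makes the substitution work: dividing $Q^B_r(n)=n^{d+1}$ by $n^2$ and multiplying $Q^B_{r-j}$ by $\prod_{i=1}^{j-1}(n+i)^2$ then gives every $Q_k$ degree exactly $d-1$. That yields order $d-1$, Fuchsianity at $\infty$, and $\widetilde{R}_{d-1}(z)=\prod_{k^2\in S_d}(1-k^2z)$. It also shows $Q^B_0$ has degree $0$ or $1$, so it is fixed by its leading coefficient and symmetry center. That is how $Q_0$, and hence Part~6, are obtained; your ``I would compute $Q_0$ explicitly'' has no mechanism behind it.

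\textbf{Self-adjointness.} This cannot come from the symmetry of the Sylvester--Kac spectrum. Any tridiagonal matrix with zero diagonal has spectrum invariant under $X\mapsto -X$, and the spectrum only sees the commutative top-degree symbol. For example, $P_3(c_1,c_2)=\theta^3-c_1\theta t-c_2t\theta$ has odd symbol $X^3-(c_1+c_2)Xt$ for all $c_i$. Yet $P_3^{\ddagger}=-P_3(c_2,c_1)$, which equals $-P_3(c_1,c_2)$ only when $c_1=c_2$.

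The missing ingredient is the reversal rule $P_n^{\ddagger}(c_1,\dots,c_{n-1})=(-1)^nP_n(c_{n-1},\dots,c_1)$, combined with the palindromicity $n(d-n+1)=(d+1-n)\,n$. Together these give $P_{d,d+1}^{\ddagger}=(-1)^{d+1}P_{d,d+1}$. You must then check that the multiplier $\prod_{i=1}^{j-1}(n+i)^2$ is symmetric about the same center $-j/2$ as $Q^B_{r-j}$, so the symmetry survives the passage to $(x_n)_n$.

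\textbf{Part~8.} Here ``with nonzero coefficient'' is the whole content. The singular coefficient of $F_{d+1}$ at $1/k^2$ is a sum of two contributions, from the singularities $1/(k\pm1)^2$ of $F_d$. The paper excludes cancellation by positivity of three things: $t_{1,2}'(1/k^2)$, the Beta-function factors from the transfer theorem, and the singular coefficients themselves, inductively starting from $F_2=(1-4z)^{-1/2}$.
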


\vspace{.2cm}
\begin{remark}
The theorem holds for $d=2$ if we interpret MUM as no-singularity (symbol 0 at $z=0$). In this case, the symbol at the only finite singularity $z=1/4$ is $-1/2$, while at $\infty$ the symbol is $1/2$, which is the interpretation of parts 5 and 6 of the theorem in this case.
\end{remark}

\begin{remark}
Part~1 of the theorem answers in the affirmative the conjecture contained in Remark~3.5 of \cite{DS1}.
\end{remark}

\vspace{.2cm}
\begin{cor} \label{cor-F-irreductibilty}
The operator $L_{d-1,F}$ above is irreducible over $\mathbb{C}(z)\langle D\rangle$.
\end{cor}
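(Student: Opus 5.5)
The plan is a Fuchsian-relation (Frobenius) counting argument. I argue by contradiction: assume $L_{d-1}=L_{d-1,F}$ has a nontrivial factorization $L_{d-1}=AB$ in $\mathbb{C}(z)\langle D\rangle$, and let $s=\text{order}(B)$. The goal is to show that the local exponents of $B$ are forced to sum to too much, violating Lemma~\ref{L-general}, Part~1.

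\textbf{Normalizing the factor $B$.} By Theorem~\ref{T-LF-properties}, Part~3, $L_{d-1}$ is formally self-adjoint of order $q=d-1$. So by Lemma~\ref{l-SA-factorization} we may assume $1\le s\le \lfloor (d-1)/2\rfloor$. By Lemma~\ref{L-factorization}, Parts~2 and~3, $B$ is Fuchsian and of MUM type at $0$, with all exponents at $0$ equal to zero, and $B$ annihilates $F_d$. By Lemma~\ref{L-factorization}, Parts~1 and~5, the singularities of $B$ lie among the following:
\begin{itemize}
\item the point $0$;
\item the points $1/k^2$ with $k\equiv d \pmod 2$;
\item the point $\infty$;
\item finitely many apparent singularities $b_1,\dots,b_a$.
\end{itemize}
At each $b_j$ the exponents are distinct nonnegative integers, so their sum is at least $s(s-1)/2$. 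At each of the remaining singular points, the local exponents of $B$ (with multiplicity) form a submultiset of those of $L_{d-1}$.

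\textbf{$F_d$ forces every $1/k^2$ to be a genuine singularity of $B$.} Fix $k$ and join $0$ to $1/k^2$ by a path avoiding the other singularities of $L_{d-1}$. Suppose $B$ had only analytic solutions at $1/k^2$, either because $1/k^2$ is an ordinary point of $B$ or an apparent singularity of it. Since apparent singularities do not obstruct continuation, $F_d$ would then continue analytically along the path to a neighborhood of $1/k^2$, contradicting Theorem~\ref{T-LF-properties}, Part~8. Hence the local solution space of $B$ at $1/k^2$ contains a non-analytic solution.

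By Parts~5 and~7, the non-analytic part of the solution space of $L_{d-1}$ at $1/k^2$ comes only from the exponent $(d-3)/2$. This exponent is non-integral when $d$ is even; when $d$ is odd it is repeated, which produces the logarithmic solution. Therefore the exponents of $B$ at $1/k^2$ contain $(d-3)/2$. The other $s-1$ exponents form a submultiset of $\{0,1,\dots,d-3\}$, so their sum is at least $0+1+\cdots+(s-2)$. Using $s\le (d-1)/2$, the sum of the exponents of $B$ at $1/k^2$ is at least
$$
\frac{(s-1)(s-2)}{2}+\frac{d-3}{2} \;=\; \frac{s(s-1)}{2} + \Big(\frac{d-3}{2}-(s-1)\Big) \;\ge\; \frac{s(s-1)}{2}.
$$

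\textbf{Contradiction via Lemma~\ref{L-general}.} The number of singularities of $B$ is $N=2+\rho+a$, where $\rho=\lfloor (d+1)/2\rfloor$ counts the points $1/k^2$. The Frobenius relation gives
$$
0+\Sigma_\infty+\sum_{k}\Sigma_{1/k^2}+\sum_j\Sigma_{b_j}=\frac{s(s-1)}{2}(\rho+a),
$$
where $\Sigma_p$ denotes the sum of the exponents of $B$ at $p$. Each $\Sigma_{1/k^2}$ and each $\Sigma_{b_j}$ is at least $s(s-1)/2$, so $\Sigma_\infty\le 0$. However, the exponents of $B$ at $\infty$ form a nonempty submultiset of those listed in Part~6, all of which are at least $1/2>0$, so $\Sigma_\infty>0$. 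This contradiction proves irreducibility.

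Equivalently, by Lemma~\ref{L-factorization}, Part~4, the argument shows that $L_{d-1}$ is the minimal operator for $F_d$. I expect the main obstacle to be the middle step: one must justify carefully that the exponent $(d-3)/2$ itself, and not merely some exponent, belongs to $B$ at each $1/k^2$. This relies on identifying the unique non-analytic direction in the local solution space from Parts~5 and~7, including the logarithmic case when $d$ is odd, and on the continuation argument that applies Part~8.
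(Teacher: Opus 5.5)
Your argument is correct and has the same skeleton as the paper's proof. You reduce to $s=\text{order}(B)\le\lfloor (d-1)/2\rfloor$ via formal self-adjointness, apply the Fuchs relation (Lemma~\ref{L-general}) to $B$, and bound the contribution of each finite nonzero point below by $s(s-1)/2$. You then contradict the positivity of the exponents at $\infty$ from Part~6.

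The difference is your middle step, which the paper does not need. At each $1/k^2$, the exponents of $B$ are $s$ nonnegative entries of the multiset $\{0,1,\dots,d-3,\tfrac{d-3}{2}\}$. Since $s-1\le\tfrac{d-3}{2}$, their sum is at least $0+1+\cdots+(s-1)=s(s-1)/2$, whether or not $\tfrac{d-3}{2}$ is among them. If $1/k^2$ is an ordinary or apparent point of $B$, the same bound holds trivially. So the paper never invokes Part~8, and the step you single out as the main obstacle can be dropped entirely.

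Keeping it costs you two things:
\begin{itemize}
\item the analytic Hadamard-convolution input behind Part~8;
\item a local Frobenius/monodromy analysis that goes beyond what Parts~5 and~7 literally say. For $d$ even you need the monodromy invariance of $\mathrm{Sol}_{1/k^2}(B)$ to isolate the $(z-1/k^2)^{(d-3)/2}$ component. For $d$ odd you need that the coefficient of $\log(z-1/k^2)$ in the logarithmic solution vanishes to order exactly $\tfrac{d-3}{2}$.
\end{itemize}

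What your detour buys is information the corollary does not use: every nontrivial right factor is genuinely singular at each $1/k^2$, with $\tfrac{d-3}{2}$ among its exponents there. The paper's version keeps the corollary dependent only on the algebraic Parts~2--6.
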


\noindent
{\it Proof of Corollary~1.}
Denote $L_{d-1,F}=L$ and assume $L=AB$. By Part 3 of Theorem \ref{T-LF-properties} and Lemma \ref{l-SA-factorization}, we can assume that $\text{order}(B)\leq \lfloor \frac{d-1}{2} \rfloor$. By Lemma \ref{L-factorization}, $B$ is Fuchsian. Every finite non-zero singularity $z=a$ is either a singularity of $L$ with $\mathrm{Sol}_a(B) \subseteq \mathrm{Sol}_a(L)$ or is an apparent singularity.

If $d=2n+1$, for $n\geq 1$, the symbols of $L$ at all finite non-zero singularities are $0, 1, 2, \dots, 2n-2, n-1$. $B$ has order $m\leq n$ and its symbols at these points are a subset of $m$ elements of the above. The smallest possible collection is $0,1,2,\dots,m-1$, with sum $m(m-1)/2$. At any apparent singularity the symbol is a collection of $m$ distinct non-negative integers (Lemma \ref{L-factorization}), so the sum is also at least $m(m-1)/2$. At $z=0$ and $\infty$ the symbols of $B$ are non-negative (subsets of the symbols of $L$ at the respective points), so by Lemma \ref{L-general} they must be all 0, which contradicts Part 6 of Theorem \ref{T-LF-properties}.

If $d=2n$, for $n\geq 2$, the symbols of $L$ at all finite non-zero singularities are $0, 1, 2, \dots, 2n-3, (2n-3)/2$. $B$ has order $m\leq n-1$ and its symbols at these points are a subset of $m$ elements of the above. Since $m-1\leq n-2 < (2n-3)/2$, the smallest possible collection is still $0, 1, 2, \dots, m-1$, with sum $m(m-1)/2$. At any apparent singularity the symbol is a collection of $m$ distinct non-negative integers (Lemma \ref{L-factorization}), so the sum is also at least $m(m-1)/2$.
We obtain again a contradiction, as explained in the paragraph above.

It follows that $L$ is irreducible. \hfill $\square$

\vspace{.2cm}
\noindent
\begin{remark}
Corollary~1 holds for $F_2$ as well since an operator of order 1 is automatically irreducible.
\end{remark}


\vspace{.2cm}
\begin{theorem} \label{T-LA-properties}
We denote by
$\displaystyle{
A_d(z)=\sum_{n \ge 0} A_{n}^{(d)} \, z^n
}$
the generating function of the sequence $(A_n^{(d)})_n$. For $d\ge 2$, we have:
\begin{enumerate}
\item $A_d(z)$ is annihilated by a differential operator
$L_{d}=L_{d,A}\in \mathbb{C}(z)\langle D\rangle$  of order $d$.

\item $L_{d}$ is Fuchsian and has maximally unipotent monodromy (MUM) at $z=0$.

\item The canonical polynomial form of $L_{d}$ is formally self-adjoint.

\item The finite singularities of $L_{d}$ away from $0$ are all simple and form the set
\[
\left\{ \frac{1}{4k^2} \ : \ k\in \mathbb{Z}_{\ge 1},\ k \equiv d \pmod{2} \right\},
\]
so there are $\lfloor \frac{d+1}{2} \rfloor$ such.

\item At each nonzero finite singularity $z=\frac{1}{4k^2}$ the local exponents are
\[
0,1,2,\dots,d-2, \frac{d-2}{2}.
\]

\item At infinity, the local exponents are
\[
\begin{cases}
\{ \frac12,1,\frac32,\dots,\frac{d-1}{2}, \frac{d}{2} \}, & \text{ if } d=2n+1 \ \text{(odd)},\\[4pt]
\{ \frac12,1,\frac32,\dots,\frac{d-1}{2}, \frac{d}{4} \}, & \text{ if } d=2n \ \text{(even)}.
\end{cases}
\]

\item At each finite nonzero singularity there are $d-1$ linearly independent analytic solutions.

\item $A_d(z)$ is the distinguished analytic solution at $z=0$ and
is singular at every nonzero finite singularity of $L_{d}$ (there is no domain $\Omega$ containing $0$ and some $1/(4k^2)$ and an analytic $\tilde{A}_d$ extending $A_d$ to $\Omega$).
\end{enumerate}
\end{theorem}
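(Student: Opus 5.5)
The plan is to derive everything about $A_d$ from Theorem~\ref{T-LF-properties} applied to $F_{d+1}$. The bridge is the recurrence (\ref{eqn-A-recurrence}), $A_n^{(d)}=\binom{2n}{n}x_n^{(d)}$, which shows that the modified Borel transforms coincide: $y_n^{(d)}=x_n^{(d)}/(n!)^2=A_n^{(d)}/(2n)!$.

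Concretely, I would proceed as follows.

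\textbf{Step 1: construct $L_{d,A}$ from the $Q$-data of $F_d$.} I take the recurrence $\sum_k Q_k^{F}(n+k)x_{n+k}=0$ for $x_n^{(d)}$ and substitute $x_n=A_n/\binom{2n}{n}$. Using $\binom{2n+2}{n+1}/\binom{2n}{n}=2(2n+1)/(n+1)$, I clear denominators. This gives a recurrence for $A_n^{(d)}$ of the same width $r=\lfloor (d+1)/2\rfloor$, with coefficients
\[
Q_k^{A}(t)=Q_k^{F}(t)\,\prod(\text{shifted factors }(2t-2j-1)/\ldots).
\]
A naive version of this raises the degree too much. The cleaner route is the Hadamard identity $A_d=F_d*A_1$ together with the $\theta$-form. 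Since $\binom{2n}{n}=4^n(1/2)_n/n!$, replacing $z^kR_k(\theta)$ by $z^k$ times $R_k(\theta)$ multiplied by a factor of the form $(\theta+1/2)$ should raise the order by exactly one, giving order $d$ from order $d-1$.

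I would verify the order count through the indicial polynomial. $R_0^F(\theta)=\theta^{d-1}$ should become $R_0^A(\theta)=\theta^{d}$, which gives MUM (Part~2). Rescaling $z\mapsto 4z$ explains why the singularities $1/k^2$ become $1/(4k^2)$ (Part~4).

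\textbf{Step 2: transfer the local properties.} The symmetry (\ref{eq-r-symmetry}) of the $Q_k^F$ should transfer to the $Q_k^A$, because the inserted factor is itself symmetric under the relevant reflection. That gives formal self-adjointness (Part~3) via Lemma~\ref{L-SA-symmetry}.

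Parts~5 and~7 then follow. At each simple singularity, self-adjointness and Lemma~\ref{L-symbols} force the symbol $\{0,\dots,d-2,(d-2)/2\}$. Since $(d-2)/2$ is not an integer $\ge d-1$, Lemma~\ref{L-structure} gives $d-1$ analytic solutions.

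For Part~6, the exponents at $\infty$ are the roots of $Q_0^A(-\lambda)$. These are the roots from $Q_0^F$, rescaled by the extra half-integer factor. I would verify this against the explicit $Q_0$ computed for $F_d$, and sanity-check it with the Frobenius relation of Lemma~\ref{L-general}.

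\textbf{Step 3: Part~8.} I would use the Hadamard convolution $A_d=F_d*F_2$ and the singularity analysis from \cite{DS1}. The singular behaviour of $F_d$ at $1/k^2$ (Theorem~\ref{T-LF-properties}, Part~8) convolved with $(1-4z)^{-1/2}$ produces genuine singularities at $1/(4k^2)$.

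Irreducibility then follows exactly as in Corollary~\ref{cor-F-irreductibilty}, using the Frobenius sums.

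\textbf{Main obstacle.} The hard step is Step~1. I need to show the resulting operator is exactly of order $d$, with no spurious degree increase and no non-Fuchsian behaviour. I also need to show that the singular polynomial $\widetilde R_q$ of Remark~\ref{R:spc} has exactly the roots $1/(4k^2)$, each simple. My first attempt would be to derive $L_{d,A}$ directly from the Borel side: $Y_d=Y_1^d$, and $Y_1=I_0(2\sqrt t)$ satisfies $(\theta^2-t)Y_1=0$. The symmetric power then gives an operator for $Y_d$ whose inverse Borel transforms with weights $n!^2$ and $(2n)!$ correspond respectively to $F_d$ and $A_d$. Order $d$ versus $d-1$ would then come from which factorial weight kills one $\theta$ factor.
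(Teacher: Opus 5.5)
Your overall plan is the paper's. You pass from the $x^{(d)}$-recurrence to an $A^{(d)}$-recurrence via $x_n=A_n/\binom{2n}{n}$. You then read off Parts~5 and~7 as for $F_d$ (self-adjointness plus Lemma~\ref{L-symbols}, then Lemma~\ref{L-structure}). Part~8 comes from $A_d=F_d*(1-4z)^{-1/2}$ and the transfer theorem. The relevant input is $F_d$, of order $d-1$, not $F_{d+1}$.

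\textbf{The gap is Step~1.} Everything else is read off from it, it is not carried out, and the mechanism you propose is wrong. The substitution, normalized by $R_0^A=\theta^d$, produces
\[
R_k^A(\theta)=2^k R_k(\theta)\,\frac{\prod_{j=1}^{k}(2\theta+2j-1)}{\prod_{j=1}^{k-1}(\theta+j)},\qquad 1\le k\le r .
\]
This is not $R_k(\theta)$ times a single factor $(\theta+\tfrac12)$; that happens only for $k=1$. For $k\ge 2$ the multiplier is rational. $R_k^A$ is a polynomial of degree $d$ only because $\prod_{j=1}^{k-1}(\theta+j)^2$ divides $R_k(\theta)$ (Part~5 of Proposition~\ref{P:x-recurrence}). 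That comes from $Q_{r-k}(n)=\prod_{j=1}^{k-1}(n+j)^2Q^B_{r-k}(n)$, i.e.\ from how the $x$-recurrence is built out of the Borel recurrence. This divisibility is not among the conclusions of Theorem~\ref{T-LF-properties}, so that theorem alone is not enough input. Without it you only get an operator of order $d-1+r$.

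Parts~2, 3, 4 and~6 all depend on the explicit $Q_k^A$: their symmetry, their leading coefficients $(-1)^{r-k}4^{r-k}e_{r-k}(S_d)$, and $Q_0^A$. So they are unproved as written too. In particular, the singularities do not move to $1/(4k^2)$ by a rescaling $z\mapsto 4z$, since the two operators have different orders. Only the top $\theta$-coefficients pick up the factor $4^k$.

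\textbf{Your Borel-side fallback does close the gap,} and more cleanly than the paper, since it bypasses the divisibility lemma entirely. Take the $N$-th coefficient of $P_{d,d+1}Y_d=0$, namely $\sum_k R^B_k(N-k)\,y_{N-k}=0$ with $y_m=A_m/(2m)!$. Multiplying it by $2\,(2N-1)!$ for $N\ge 1$ gives exactly the $N$-th coefficient of $L_{d,A,\theta}A_d$, where
\[
L_{d,A,\theta}=\theta^{d}+2\sum_{k=1}^{r} z^k\,R^B_k(\theta)\prod_{l=1}^{2k-1}(2\theta+l).
\]
There are no initial-condition issues, since this comes from an operator identity. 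The properties then follow directly:
\begin{itemize}
\item Since $\deg R^B_k=d+1-2k$, every coefficient has degree $d$ (Part~1).
\item The leading coefficients are $(-1)^k4^ke_k(S_d)$, so $\widetilde R_d(z)=\prod_{s\in S_d}(1-4sz)$ (Part~4).
\item The multiplier has the $2k-1$ roots $-l/2$. They are symmetric about $-k/2$ and odd in number, so the $(-1)^{d+1}$-symmetry of $R^B_k$ becomes the $(-1)^d$-symmetry needed for Part~3.
\item $R^A_r=Q^A_0$ has roots $-\tfrac12,-1,\dots,-\tfrac{2r-1}{2}$, plus $-\tfrac r2$ when $d$ is even. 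This is Part~6.
\end{itemize}
These agree with the paper's $Q^A_k$. With this computation actually done, your treatment of Parts~5, 7 and~8 goes through as in the paper.
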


\vspace{.2cm}
\begin{remark}
$A_1=F_2$, so the theorem holds for $d=1$ with the same caveats as for $F_2$.
\end{remark}

\vspace{.2cm}
\begin{cor} \label{cor-A-irreductibilty}
Then operator $L_{d,A}$ above is irreducible over $\mathbb{C}(z)\langle D\rangle$.
\end{cor}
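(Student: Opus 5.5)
The plan is to copy the proof of Corollary~\ref{cor-F-irreductibilty} almost word for word, using the data in Theorem~\ref{T-LA-properties} in place of Theorem~\ref{T-LF-properties}. Write $L=L_{d,A}$ and suppose $L=AB$ is a nontrivial factorization. By Part~3 of Theorem~\ref{T-LA-properties} and Lemma~\ref{l-SA-factorization}, we may take $m=\text{order}(B)$ with $1\le m\le\lfloor d/2\rfloor$. By Lemma~\ref{L-factorization}, Parts~2 and~3, $B$ is Fuchsian and of MUM type at $z=0$. At each singular point its local exponents form an $m$-element sub-multiset of the exponents of $L$ there.

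The key step is to bound the exponent sums at the points other than $0$ and $\infty$. At a finite nonzero singularity $1/(4k^2)$ of $L$, the exponents of $L$ are $0,1,\dots,d-2,(d-2)/2$ by Part~5. Every member of this multiset is nonnegative.
\begin{itemize}
\item If $d=2n+1$, the value $(d-2)/2=n-\tfrac12$ is not an integer. The smallest possible sum of $m$ distinct-index elements is then attained by $0,1,\dots,m-1$ whenever $m-1<n-\tfrac12$, and this holds because $m\le n$.
\item If $d=2n$, the value $(d-2)/2=n-1$ duplicates an integer already in the list. The minimal sum of $m$ elements is $0+1+\dots+(m-2)+\min(m-1,n-1)$, and since $m\le n$ this equals $m(m-1)/2$ again.
\end{itemize}
At any apparent singularity of $B$, Lemma~\ref{L-factorization}, Part~5, gives $m$ distinct nonnegative integers, so the sum is again at least $m(m-1)/2$. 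At $\infty$ all exponents of $L$ are positive by Part~6, hence so are those of $B$. At $0$ the exponents of $B$ are all zero.

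Lemma~\ref{L-general}, Part~2, now applies. All exponents of $B$ are nonnegative, and the sum at every singularity other than $0,\infty$ is at least $m(m-1)/2$. Hence the exponents of $B$ at $\infty$ must all be zero. This contradicts Part~6, which says every exponent of $L$ at $\infty$ is at least $1/2$, so no zero exponent is available to $B$. The degenerate cases are harmless: if $B$ has no singularity at $\infty$, its exponents there would be $0,1,\dots,m-1$, which contain $0$ and are excluded in the same way. Therefore $L$ is irreducible.

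The only point requiring care is the even case $d=2n$ with $m=n$, where the repeated exponent $n-1$ could in principle replace $m-1=n-1$; it gives the same sum, so the bound survives. As an alternative, one could instead invoke Lemma~\ref{L-factorization}, Part~4, together with the Hadamard relation $A_d=F_d*F_2$.
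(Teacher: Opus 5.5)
Your proof is correct and follows the paper's argument essentially step for step: Part~3 of Theorem~\ref{T-LA-properties} together with Lemma~\ref{l-SA-factorization} gives $\mathrm{order}(B)=m\le\lfloor d/2\rfloor$. The exponent sums of $B$ at the points $1/(4k^2)$ and at apparent singularities are then at least $m(m-1)/2$, so Lemma~\ref{L-general} forces zero exponents at $\infty$, contradicting Part~6. Your explicit treatment of the case where $\infty$ is an ordinary point of $B$ is slightly more careful than the paper. The closing ``alternative'' via $A_d=F_d*F_2$ and Lemma~\ref{L-factorization}, Part~4, is not justified as stated, since minimality for $A_d$ does not follow directly from that for $F_d$, and is best omitted.
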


\noindent
{\it Proof of Corollary~2.}
Denote $L_{d,A}=L$ and assume $L=AB$. By Lemma \ref{l-SA-factorization} and part 3 of Theorem \ref{T-LF-properties}, we can assume that $\text{order}(B)\leq \lfloor \frac{d}{2} \rfloor$. By Lemma \ref{L-factorization}, $B$ is Fuchsian. Every finite non-zero singularity $z=a$ is either a singularity of $L$ with $\mathrm{Sol}_a(B) \subseteq \mathrm{Sol}_a(L)$ or is an apparent singularity.

If $d=2n$, for $n\geq 1$, the symbols of $L$ at all finite non-zero singularities are $0, 1, 2, \dots, 2n-2, n-1$. $B$ has order $m\leq n$ and its symbols at these points are a subset of $m$ elements of the above. The smallest possible collection is $0,1,2,\dots,m-1$, with sum $m(m-1)/2$. At any apparent singularity the symbol is a collection of $m$ distinct non-negative integers (Lemma \ref{L-factorization}), so the sum is also at least $m(m-1)/2$. At $z=0$ and $\infty$ the symbols of $B$ are non-negative (subsets of the symbols of $L$ at the respective points), so by Lemma \ref{L-general} they must be all 0, which contradicts Part~6 of Theorem~\ref{T-LA-properties}.

If $d=2n+1$, for $n\geq 1$, the symbols of $L$ at all finite non-zero singularities are $0, 1, 2, \dots, 2n-1, (2n-1)/2$. $B$ has order $m\leq n$ and its symbols at these points are a subset of $m$ elements of the above. Since $m-1\leq n-1 < (2n-1)/2$, the smallest possible collection is still $0, 1, 2, \dots, m-1$, with sum $m(m-1)/2$. At any apparent singularity the symbol is a collection of $m$ distinct non-negative integers (Lemma \ref{L-factorization}), so the sum is also at least $m(m-1)/2$.
We obtain again a contradiction, as explained in the paragraph above.

It follows that $L$ is irreducible. \hfill $\square$

\begin{remark}
Corollary~2 holds for $A_1=F_2$ as well since an operator of order 1 is automatically irreducible.
\end{remark}

\section{Proofs}\label{sec-proofs}
In this section we use an algebraic and combinatorial analysis of the symmetric powers of a modified Borel transform of $F_d$ to prove Parts~1 -- 6 of Theorem~\ref{T-LF-properties}. In the process, we also prove that the sequences  $(x_n^{(d)})_n$ satisfy a $P$-recurrence of width $r_d=r=\lfloor \frac{d+1}{2} \rfloor$, so of the form
$$
Q_r(n+r)x_{n+r}+\cdots + Q_0(n)x_n=0, \; n \ge 0.
$$
Here we show that $Q_r(n)=n^{d-1}$, compute explicitly $Q_{r-1}$ for $d \geq 3$, and show that
\begin{equation}\label{Q0n-odd}
Q_0(n)=(-1)^{r}
\cdot
\left(
 \prod_{\substack{1\le k\le d\\ k\equiv d\!\!\!\pmod 2}} k^2
\right)
\cdot
 \prod_{k=1}^{\tfrac{d-1}{2}} (n+k)^2,
\text{ if } d \geq 3 \text{ odd},
\end{equation}

and

\begin{equation}\label{Q0n-even}
Q_0(n)=(-1)^{r}
\cdot
\left( \prod_{\substack{1\le k\le d\\ k\equiv d\!\!\!\pmod 2}} k^2 \right)
\cdot \left( n+\frac{d}{4} \right)
\cdot
 \prod_{k=1}^{\tfrac{d-2}{2}} (n+k)^2,
\text{ if } d \geq 4 \text{ is even}.
\end{equation}
(Note that for $d=2$ we have  $r=1$,  $Q_1(n)=n$, $Q_0(n)=-4(n+\frac{1}{2})$.)

Part~7 is an immediate consequence of Parts~4 and 5, and Lemma~\ref{L-structure}.
Part~8 has essentially been proven in our earlier paper \cite{DS1}, but we will give a quick recap here using the inductive definition of $F_d$ by the Hadamard convolution method from that paper.

From Part~8 it immediately follows that the $P$-recurrence above has minimal width, which is precisely the number of finite nonzero singularities of $L_{d-1,F}$. This is because those singularities are then true singularities of $F_d$, so the framework dictionary in Section~\ref{sec-3-framework} precludes a recurrence with strictly lower width.

While essentially the same proof works for $A_d$ and Theorem~\ref{T-LA-properties}, with the appropriately modified Borel transform, we conclude the section with a direct derivation of Theorem~\ref{T-LA-properties} from Theorem~\ref{T-LF-properties} and the framework dictionary, as we think this better emphasizes the intimate relation between the two sequences, which is the theme of our two papers in the field.

\subsection{The (modified) Borel Transform of $F_d$}

Since
$$
x_n^{(d)}=\sum_{k_1+ \cdots + k_d=n}\frac{(n!)^2}{(k_1! \cdots k_d!)^2}
$$
we define the modified Borel Transform of $F$ to be the entire power series
$$
Y_d(t)=\sum_{n \ge 0} \frac{x_n^{(d)}}{(n!)^2} \, t^n
 = \sum_{n \ge 0} y_n^{(d)} \, t^n, \text{ with }
 y_n^{(d)}=\frac{x_n^{(d)}}{(n!)^2}.
$$
Note that if $$\phi(t)=\sum_{m\ge 0}\frac{t^m}{(m!)^2}$$ we immediately get that $Y_d=\phi^d$.

Now if as usual $\theta=t\frac{d}{dt}$, we have
$(\theta^2-t)\phi=0$.
In particular, with the notation $u=\phi$ and $v=\theta \phi$, it follows that $Y_d$ lives in the homogeneous space generated by $u^{d-k}v^{k}$, for $0\leq k \leq d$, under the generating relations
$$
\theta u=v \quad\text{ and }\quad \theta v=tu.
$$
For a fixed integer \(d\ge 1\), define
\[
e_j:=u^{\,d-j}v^{\,j}\qquad (\text{for } 0\le j\le d).
\]
The differential operator $\theta$ acts on it (by the Leibniz rule) as follows:
\[
\theta e_j=(d-j)e_{j+1}+jt\,e_{j-1},
\]
for $0\le j\le d$, with the convention \(e_{-1}=e_{d+1}=0\).

Consider a sequence of numbers $c_1, c_2, \dots, c_n, \dots$. Let $D_n$ be a length $n$ domino which we tile with two pieces: a one cell which we tag with $\theta$, and a 2-cell which we tag with $-c_k t$ when it is placed on the position $k,k+1$, where $1\leq k \leq n-1$. For each such tiling $T$ we associate the differential operator $P_T$ obtaining by multiplying the tags of the tiles from right to left, and we define
\begin{equation}\label{eqn-tile-recurrence}
    P_n(c_1, \dots, c_{n-1})= \sum_T P_T.
\end{equation}
For example, $P_0=1$, $P_1=\theta$, $P_2(c_1)=\theta^2-c_1 t$, and $P_3(c_1,c_2)=\theta^3-c_1 \theta t-c_2 t \theta$.

\vspace{.4cm}
From Section~\ref{sec-4-SA-ASA}, we recall that \(^{\ddagger}\) is the anti-involution  of \(\mathbf C[t] \langle \theta \rangle \) defined by (\ref{eq-anti-involution}).

\vspace{.3cm}
\begin{lemma}[Tilling differential operators]
For $n\geq 2$, we have:

$$
P_{n+1}(c_1, \dots, c_{n}) = \theta P_n(c_1, \dots, c_{n-1}) -
 c_n t P_{n-1}(c_1, \dots, c_{n-2});
$$

$$
P_{n+1}(c_1, \dots, c_{n}) = P_n(c_2, \dots, c_{n}) \theta -
 c_1 P_{n-1}(c_3, \dots, c_{n}) t;
$$

\begin{equation}\label{eq:adjoint-P}
P_n^{\ddagger}(c_1, \dots, c_{n-1}) = (-1)^n
P_n(c_{n-1}, \dots, c_{1}).
\end{equation}
\end{lemma}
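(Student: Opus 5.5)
The plan is to argue directly from the tiling definition (\ref{eqn-tile-recurrence}), treating $P_n(c_1,\dots,c_{n-1})$ as a sum over tilings $T$ of the length-$n$ strip. The tags are multiplied from right to left, so cell $1$ contributes the rightmost factor and cell $n$ the leftmost.

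For the first identity, I would sort the tilings of $D_{n+1}$ by the tile covering the last cell $n+1$, which is the leftmost factor in $P_T$.
\begin{itemize}
\item If it is a $1$-cell tagged $\theta$, removing it leaves an arbitrary tiling of $D_n$ that uses the weights $c_1,\dots,c_{n-1}$. These tilings contribute $\theta P_n(c_1,\dots,c_{n-1})$.
\item Otherwise it is the $2$-cell on positions $n,n+1$, tagged $-c_nt$. The remaining tiling of $D_{n-1}$ uses $c_1,\dots,c_{n-2}$, giving $-c_nt\,P_{n-1}(c_1,\dots,c_{n-2})$.
\end{itemize}
Because each tag sits as a left factor, no commutation between $\theta$ and $t$ is needed, and the identity follows.

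For the second identity, I would instead sort by the tile covering cell $1$, which is the rightmost factor.
\begin{itemize}
\item If it is $\theta$, the remaining cells $2,\dots,n+1$ form a strip of length $n$. Its $2$-cells at positions $k,k+1$ with $k\ge 2$ carry $c_k$. After relabeling positions, this strip has weight sequence $(c_2,\dots,c_n)$, and these tilings give $P_n(c_2,\dots,c_n)\theta$.
\item If it is the $2$-cell at $1,2$, tagged $-c_1t$, the rest is a strip of length $n-1$ with weights $(c_3,\dots,c_n)$. These tilings give $-c_1P_{n-1}(c_3,\dots,c_n)\,t$. The scalar $c_1$ commutes with everything and can be pulled out.
\end{itemize}

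For (\ref{eq:adjoint-P}), I would apply $\ddagger$ term by term. For a tiling $T$ with tiles read $\tau_j,\dots,\tau_1$ from left to right, $P_T^{\ddagger}=\tau_1^{\ddagger}\cdots\tau_j^{\ddagger}$, which is the product in reversed order. The anti-involution gives $\theta^{\ddagger}=-\theta$ and $(-c_kt)^{\ddagger}=-c_kt$. The reversed product is exactly $P_{T'}$ for the mirror tiling $T'$, obtained by the reflection $i\mapsto n+1-i$, with weights reversed. Indeed, a $2$-cell on $k,k+1$ maps to positions $n-k,n-k+1$, which carries $c_{n-k}$ in the reversed sequence $(c_{n-1},\dots,c_1)$, so the weight $c_k$ is preserved. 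The only change is a sign $(-1)^{\#\{1\text{-cells}\}}$. Since $\#1\text{-cells}+2\,\#2\text{-cells}=n$, this sign equals $(-1)^n$. Reflection is a bijection on tilings, so summing over $T$ yields (\ref{eq:adjoint-P}).

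Alternatively, the third identity can be proved by induction on $n$, using the first two recursions. Applying $\ddagger$ to the first recursion and using the inductive hypothesis gives the second recursion for the reversed sequence. The main obstacle is purely bookkeeping: tracking the index shift when relabeling the reflected or truncated strips, and checking that $\ddagger$ fixes $t$ while flipping the sign of $\theta$. There is no real analytic difficulty.
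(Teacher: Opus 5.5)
Your proof is correct. For the two recursions you argue exactly as the paper does, conditioning on the tile that covers cell $n+1$ (the leftmost factor) or cell $1$ (the rightmost factor). Your reading of the right-to-left convention agrees with the paper's example $P_3(c_1,c_2)=\theta^3-c_1\theta t-c_2t\theta$.

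For the identity $P_n^{\ddagger}(c_1,\dots,c_{n-1})=(-1)^nP_n(c_{n-1},\dots,c_1)$, your main argument differs from the paper's. The paper checks $P_0,P_1,P_2$ and then inducts:
\begin{itemize}
\item applying $\ddagger$ to the first recursion gives $-P_n^{\ddagger}\theta-c_nP_{n-1}^{\ddagger}t$;
\item the hypothesis for $n$ and $n-1$ turns this into $(-1)^{n+1}\bigl(P_n(c_{n-1},\dots,c_1)\theta-c_nP_{n-1}(c_{n-2},\dots,c_1)t\bigr)$;
\item the second recursion, applied to the reversed sequence $(c_n,\dots,c_1)$, identifies this with $(-1)^{n+1}P_{n+1}(c_n,\dots,c_1)$.
\end{itemize}
This is precisely the alternative you sketch in your last paragraph, and it is the only place the paper uses the second recursion.

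Your reflection $i\mapsto n+1-i$ instead proves the identity tiling by tiling, $P_T^{\ddagger}=(-1)^nP_{T'}$. It needs no base cases and no second recursion. It also shows directly that the sign is just $(-1)^{\#1\text{-cells}}$, and that the weights match because the reversed sequence has $c'_i=c_{n-i}$. The paper's induction is shorter once both recursions are available and stays entirely at the level of operator identities. The two proofs rest on the same bijection: each induction step strips the last tile on one side and the first tile on the other, which the reflection does all at once.
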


\begin{proof}
The first relation follows by looking at the last tag in a tiling $T$ of $D_n$, since that can be either $\theta$ or $-c_n t$. The second relation follows from looking at the first tag in a tiling $T$ of $D_n$, since that can be either $\theta$ or $-c_1 t$. $P_0=1$, $P_1=\theta$, $P_2(c_1)=\theta^2-c_1 t$ manifestly satisfy $P_n^{\ddagger} = (-1)^n P_n$. Assuming inductively the relation~(\ref{eq:adjoint-P}), we get

$$
\begin{aligned}
 P_{n+1}^{\ddagger}(c_1, \dots, c_{n}) & = - P_n^{\ddagger}(c_1, \dots, c_{n-1}) \theta - c_n P_{n-1}^{\ddagger}(c_1, \dots, c_{n-2}) t \\
 & = (-1)^{n+1}\left(  P_n(c_{n-1}, \dots, c_{1}) \theta
 - c_n P_{n-1}(c_{n-2}, \dots, c_{1}) t
\right) \\
 & = (-1)^{n+1} P_{n+1}(c_{n}, \dots, c_{1}).
\end{aligned}
$$
\end{proof}

\vspace{.2cm}
Consider the family of operators \(P_{d,n}\in \mathbb{C}[t]\langle \theta\rangle\) given recursively by
$P_{d,0}=1$, $P_{d,1}=\theta$, and for $1\le n\le d$,
\[
P_{d,n+1}=\theta P_{d,n}-n(d-n+1)\,t\,P_{d,n-1}.
\]
This is an example of the previous construction:
$$
P_{d,n+1}= P_{n+1}(d, 2(d-1), \cdots, n (d-n+1)), 1\leq n \leq d.
$$

\vspace{.2cm}
\begin{lemma}[Symmetric power recursion] For a fixed integer \(d\ge 1\) and $e_j=u^{\,d-j}v^{\,j}$, for $0\le j\le d$, the following hold:
\begin{enumerate}
    \item $P_{d,d+1}(e_0)=0$.
    \item $P_{d,d+1}^{\ddagger}=(-1)^{d+1} P_{d,d+1}.$
\end{enumerate}
\end{lemma}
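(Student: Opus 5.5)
For Part~1, I would prove by induction on $n$ that for $0\le n\le d+1$
\[
P_{d,n}(e_0)=\frac{d!}{(d-n)!}\,e_n ,
\]
with the convention that the right-hand side is $0$ when $n=d+1$. The constant is the falling factorial $(d)_n=d(d-1)\cdots(d-n+1)$.

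The base cases are $P_{d,0}e_0=e_0$ and $P_{d,1}e_0=\theta e_0=d\,e_1$. For the inductive step I apply the recursion $P_{d,n+1}=\theta P_{d,n}-n(d-n+1)tP_{d,n-1}$ together with the formula $\theta e_j=(d-j)e_{j+1}+jt\,e_{j-1}$. This gives
\[
P_{d,n+1}e_0=(d)_n\bigl((d-n)e_{n+1}+nt\,e_{n-1}\bigr)-n(d-n+1)(d)_{n-1}\,t\,e_{n-1}.
\]
Since $(d)_n=(d)_{n-1}(d-n+1)$, the two $t\,e_{n-1}$ terms cancel. What remains is $(d)_n(d-n)e_{n+1}=(d)_{n+1}e_{n+1}$. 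At $n=d$ the factor $d-n=0$, so $P_{d,d+1}e_0=0$, as required.

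One point needs care: $P_{d,n}$ is an operator with coefficients in $\mathbb{C}[t]$, and multiplication by $t$ commutes with the evaluation on the functions $e_j$. So acting on $e_0$ is just composition of operators, and the only bookkeeping is the order in which $t$ and $\theta$ appear in the recursion. I expect this step to be routine. Note that the recursion places $t$ to the left of $P_{d,n-1}$, so we evaluate $P_{d,n-1}e_0$ first and then multiply by $t$, which is exactly what the computation above does.

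For Part~2, I use the identification $P_{d,d+1}=P_{d+1}(c_1,\dots,c_d)$ with $c_k=k(d-k+1)$, combined with the adjoint identity (\ref{eq:adjoint-P}) from the tiling lemma. That identity gives
\[
P_{d,d+1}^{\ddagger}=(-1)^{d+1}P_{d+1}(c_d,\dots,c_1).
\]
It is then enough to check that the sequence is palindromic, $c_{d+1-k}=c_k$. This holds because $c_{d+1-k}=(d+1-k)\,k$. Hence $P_{d+1}(c_d,\dots,c_1)=P_{d,d+1}$, which is the claim. The tiling lemma was stated for $n\ge 2$; the small cases $d=0,1$ can be checked directly (for example $P_2(c_1)=\theta^2-c_1t$ is $\ddagger$-invariant), so no real obstacle remains. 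The only mildly delicate step is the cancellation in Part~1.
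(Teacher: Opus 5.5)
Your proof is correct and follows the paper's argument essentially verbatim: the same induction $P_{d,n}(e_0)=\frac{d!}{(d-n)!}\,e_n$ with the same cancellation of the $t\,e_{n-1}$ terms (you push the final step $n=d$ through the general formula using $d-n=0$, where the paper writes it out separately), and Part~2 via the adjoint identity (\ref{eq:adjoint-P}) together with the palindromicity of $c_k=k(d-k+1)$. Your worry about small cases is unnecessary, since $d\ge 1$ gives $d+1\ge 2$, and the identity is in any case proved by induction starting from $P_0$, $P_1$, $P_2$.
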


\begin{proof}
We prove by induction on \(n\) that
\[
P_{d,n}(e_0) = c_{d,n}\,e_n, \text{ for } 0\le n\le d
\text{ and where } c_{d,n}=\frac{d!}{(d-n)!}.
\]
It is clear that
$$
P_{d,0}(e_0) = e_0 \text{ and } P_{d,1}(e_0) = \theta \, e_0=d\,e_1.
$$

\noindent
Assume that for some \(1\le n\le d-1\),
\[
P_{d,n}(e_0)=c_{d,n}\, e_n
\text{ and }
P_{d,n-1}(e_0)=c_{d,n-1}\, e_{n-1}.
\]
Then
\[
P_{d,n+1}(e_0)
=
\theta P_{d,n}(e_0)-n(d-n+1)t\,P_{d,n-1}(e_0),
\]
hence
\[
P_{d,n+1}(e_0)
=
c_{d,n}\, \theta \, e_n-n(d-n+1)t\,c_{d,n-1}\, e_{n-1}.
\]
Using
\[
\theta \, e_n=(d-n)e_{n+1}+nt\,e_{n-1},
\]
we obtain
\[
P_{d,n+1}(e_0)
=
c_{d,n}(d-n)e_{n+1}
+
n t\bigl(c_{d,n}-(d-n+1)c_{d,n-1}\bigr)e_{n-1}.
\]
But
\[
c_{d,n}=(d-n+1)c_{d,n-1},
\]
so the coefficient of \(e_{n-1}\) vanishes, and therefore
$$
P_{d,n+1}(e_0)=c_{d,n}(d-n) \, e_{n+1} = c_{d,n+1} \, e_{n+1}.
$$
This proves the claim for all \(0\le n\le d\).

Finally, for \(n=d\),
\[
P_{d,d}(e_0)=d!\,e_d.
\]
Hence
\[
P_{d,d+1}(e_0)
=
\theta(d!e_d)-d\cdot 1\cdot t\,P_{d,d-1}(e_0).
\]
Since
\[
\theta e_d=d t\,e_{d-1}
\quad\text{and}\quad
P_{d,d-1}(e_0)=d!\,e_{d-1},
\]
we obtain
\[
P_{d,d+1}(e_0)=d!\,d t\,e_{d-1}-d t\,d!\,e_{d-1}=0.
\]
This finishes the proof of part~1. Part~2 follows from (\ref{eq:adjoint-P}) and the fact that the sequence
$ d, 2(d-1), \cdots, n (d-n+1)$ is palindromic for $n=d$.
\end{proof}

\vspace{.4cm}
\begin{cor}
Let \(Y_d(t)=\phi(t)^d\). Then $P_{d,d+1}(Y_d)=0$.
\end{cor}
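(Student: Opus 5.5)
This follows directly from Part~1 of the preceding Symmetric power recursion lemma, once we check that the formal computation there takes place in a ring of actual functions. The plan is to realize the abstract symbols $u,v$ as concrete functions and then apply the lemma.

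First, set $u=\phi(t)$ and $v=\theta\phi(t)$. Both are entire functions of $t$. The relation $(\theta^2-t)\phi=0$ can be checked coefficientwise: $\theta^2\phi=\sum_m m^2 t^m/(m!)^2=\sum_{m\ge1} t^m/((m-1)!)^2=t\phi$. This gives exactly the generating relations $\theta u=v$ and $\theta v=tu$ used before.

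Next, $\theta$ is a derivation on the ring of entire functions in $t$. Hence for $e_j=u^{d-j}v^j$ the Leibniz rule gives $\theta e_j=(d-j)e_{j+1}+jt\,e_{j-1}$. The induction in the proof of the lemma uses only these identities and the recursive definition of $P_{d,n}$ in $\mathbb{C}[t]\langle\theta\rangle$, acting on functions with $t$ as multiplication. So it holds verbatim for the concrete functions and yields $P_{d,d+1}(e_0)=0$.

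Finally, $e_0=u^d=\phi^d=Y_d$; this is the identity $Y_d=\phi^d$ noted above, obtained from the Cauchy product of the series. Therefore $P_{d,d+1}(Y_d)=0$.

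There is no real obstacle. The only point needing care is that the lemma's manipulations are to be read as identities of functions, and not in a free algebra where $u,v$ might satisfy extra relations. The argument only ever uses the derivation property and the two relations above, so no independence of $u,v$ is needed.
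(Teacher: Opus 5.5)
Your proposal is correct and is the paper's argument: the corollary follows directly from Part~1 of the symmetric power recursion lemma with $u=\phi$, $v=\theta\phi$ and $e_0=u^d=\phi^d=Y_d$. Your remark that the computation uses only the derivation property of $\theta$ and the relations $\theta u=v$, $\theta v=tu$, with no independence of $u,v$, is accurate; this is why the paper can state the corollary without further proof.
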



\vspace{.4cm}
\begin{proposition}
Fix any integer $d\geq 1$ and
let $ S_d:=\{k^2:\ 1\le k\le d,\ k\equiv d \pmod 2\}. $
Then
\[
P_{d,d+1}
=
\sum_{m=0}^{\lfloor (d+1)/2\rfloor}
(-1)^m t^m
\Bigl(
e_m(S_d)\,\theta^{\,d+1-2m}
+\text{ lower powers of }\theta
\Bigr),
\]
where \(e_m(S_d)\) denotes the \(m\)-th elementary symmetric sum of the elements of \(S_d\).
\end{proposition}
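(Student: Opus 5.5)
The plan is to reduce the statement to a commutative computation of the leading symbols, and then identify the resulting scalar polynomial as the characteristic polynomial of the Sylvester--Kac matrix.

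\emph{Step 1: reduce to a combinatorial sum.} By the tiling description (\ref{eqn-tile-recurrence}), $P_{d,d+1}=\sum_T P_T$ over tilings $T$ of the length-$(d+1)$ domino. A tiling with $m$ dominoes, placed at positions $k_1<\dots<k_m$ with $k_{i+1}\ge k_i+2$, contributes a word containing $d+1-2m$ letters $\theta$ and $m$ factors $-c_{k_i}t$, where $c_k=k(d-k+1)$. I would move all the $t$'s to the left using $\theta t=t(\theta+1)$. Each such move replaces a $\theta$ by $\theta+1$, so it does not change the top power of $\theta$. Hence
\[
P_T=(-1)^m\Bigl(\prod_i c_{k_i}\Bigr)\, t^m\bigl(\theta^{d+1-2m}+\text{lower powers of }\theta\bigr).
\]
Summing over tilings, the coefficient of $t^m\theta^{d+1-2m}$ equals $(-1)^m M_m$, where $M_m$ is the weighted matching number
\[
M_m=\sum_{\substack{k_1<\dots<k_m\\ \text{non-adjacent}}}\ \prod_i c_{k_i}.
\]
No $t^m$ with $m>\lfloor (d+1)/2\rfloor$ occurs. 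It therefore remains to show that $M_m=e_m(S_d)$.

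\emph{Step 2: pass to scalars.} Define the commutative polynomials $p_0=1$, $p_1=x$, and $p_{n+1}=x\,p_n-c_n\,p_{n-1}$. These are exactly the leading symbols obtained by setting $t=1$ and $\theta=x$ while keeping only top-degree terms, so $p_{d+1}(x)=\sum_m(-1)^m M_m x^{d+1-2m}$. The goal becomes
\[
p_{d+1}(x)=\prod_{j=0}^{d}\bigl(x-(d-2j)\bigr)=x^{\varepsilon}\prod_{k\in S_d}(x^2-k^2),
\]
with $\varepsilon=1$ if $d$ is even and $\varepsilon=0$ if $d$ is odd. Expanding the right-hand side gives exactly the coefficients $(-1)^m e_m(S_d)$. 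Indeed, the numbers $d-2j$ are $\pm k$ for $k\equiv d\pmod 2$ with $1\le k\le d$, together with a single $0$ when $d$ is even.

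\emph{Step 3: identify $p_{d+1}$ as a characteristic polynomial.} I would mimic the Symmetric power recursion lemma with $t$ frozen at $1$. Let $V$ be the span of $e_j=u^{d-j}v^j$, and let $N$ be the linear derivation determined by $Nu=v$ and $Nv=u$. Then $N e_j=(d-j)e_{j+1}+j\,e_{j-1}$, which is the Sylvester--Kac tridiagonal matrix. The same induction as in that lemma gives $p_n(N)e_0=\frac{d!}{(d-n)!}e_n$ for $n\le d$, and $p_{d+1}(N)e_0=0$. Since $e_0,Ne_0,\dots,N^de_0$ span $V$ (by the triangularity just established), $e_0$ is a cyclic vector. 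The monic polynomial $p_{d+1}$ of degree $d+1=\dim V$ kills it, so $p_{d+1}$ is the characteristic polynomial of $N$. Finally, $N$ is the derivation induced on $\mathrm{Sym}^d$ of the $2\times 2$ matrix $\begin{pmatrix}0&1\\1&0\end{pmatrix}$, whose eigenvalues are $\pm1$. In the eigenbasis $u\pm v$, the monomials $(u+v)^{d-j}(u-v)^j$ are eigenvectors with eigenvalues $d-2j$. This gives the product formula.

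I expect the main obstacle to be the bookkeeping in Step 1. One must check carefully that commuting the $t$'s past the $\theta$'s only produces strictly lower powers of $\theta$ within the same $t^m$ coefficient, and that no cancellation between different tilings affects the top coefficient. The latter holds because every tiling contributes to the top term with the same sign pattern. The remaining steps are routine once the Kac matrix is recognized.
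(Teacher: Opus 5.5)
Your proof is correct. Steps 1--2 match the paper: you read the leading $\theta$-coefficient of each $t^m$ off the tiling expansion, and this is equivalent to the paper's recurrence $a_{d,n+1,m}=a_{d,n,m}-n(d-n+1)\,a_{d,n-1,m-1}$, which leads to the same commutative three-term recurrence. Freezing $t=1$ loses nothing, since in the leading terms the $t$-degree is determined by the $\theta$-degree.

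Step 3 takes a genuinely different route.
\begin{itemize}
\item The paper asserts that $U_{d+1}(X,s^2)=\det(XI_{d+1}-B_d(s))$ by a row (continuant) expansion of the Sylvester--Kac matrix. It then quotes Kac's classical result that the spectrum is $\{(d-2j)s\}$.
\item You avoid both the determinant and the citation. Your cyclic-vector argument reuses the computation $p_n(N)e_0=\frac{d!}{(d-n)!}e_n$ from the Symmetric power recursion lemma. It shows that $p_{d+1}$ is the characteristic polynomial of $N$ with no determinant at all.
\item Your eigenbasis $(u+v)^{d-j}(u-v)^j$ of $\mathrm{Sym}^d$ then gives the eigenvalues $d-2j$ directly. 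This is in fact the standard proof of the Sylvester--Kac theorem.
\item Your $N$ is the transpose of $B_d(1)$, so the two characteristic polynomials agree.
\end{itemize}
What each buys: the paper's route is shorter on the page, given the citation. Yours is self-contained and explains why the Kac matrix appears at all. It is the matrix of $\theta$ (at $t=1$) acting on the symmetric power spanned by the $e_j$, the same structure that produced $P_{d,d+1}$ in the first place.

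Your worry in Step 1 about cancellation between tilings is unnecessary. The top coefficient is by definition $(-1)^m M_m$, and you then prove the identity $M_m=e_m(S_d)$, so no positivity or sign argument is needed.
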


\begin{proof}
We compare only the highest power of \(\theta\) occurring in each coefficient of \(t^m\), which we call $a_{d,n,m}$ for $P_{d,n}$.
From
$$
P_{d,n+1}=\theta P_{d,n}-n(d-n+1)\,t\,P_{d,n-1}
$$
and
$$
\gt \big( t^m \, R_m(\gt) \big) = t^m \gt \, R_m(\gt) + m t^m \, R_m(\gt),
$$
the leading \(\theta\)-coefficient \(a_{d,n,m}\) satisfies
\[
a_{d,n+1,m}=a_{d,n,m}-n(d-n+1)\,a_{d,n-1,m-1}.
\]
The initial values are $a_{d,0,0}=1$, $a_{d,1,0}=1$,
and \(a_{d,n,m}=0\) if \(m<0\) or \(2m>n\).

\vspace{.2cm}
We introduce the following polynomials in variables $X$ and $t$:
$$
U_0(X,t)=1, \; U_1(X,t)=X,
\text{ and }
U_n(X,t)=\sum_{m} a_{d,n,m} \, X^{n-2m} \, t^m.
$$
Also set
\[
\lambda_n:=n(d-n+1)\qquad (n=1,\dots,d).
\]
Then these polynomials satisfy the recurrence relation
$$
U_{n+1} = X \, U_n-\lambda_n t U_{n-1}.
$$
Hence $U_n$'s satisfy the same recurrence as the $P_n$'s, except that now $X$ and $t$ are commuting variables. In particular, $U_n (\lambda_1, \lambda_2, \dots, \lambda_{n-1})$ is given by equation~(\ref{eqn-tile-recurrence}), where now the order of the multiplication of the tags is irrelevant. So

\begin{equation}\label{eq:cell-U}
U_{d+1}(X,t) = \sum_m (-1)^m \sum_{T, \#2-cells(T)=m}
 \left( \prod_{(j,j+1) \text{ cell of } T} \lambda_j \, X^{d+1-2m}\, t^m \right).
\end{equation}

Let $s^2=t$ and $B_d(s)$ be the Sylvester-Kac tri-diagonal $(d+1)\times (d+1)$-matrix \cite{Kac47} given by
$$
B_d(s) =
\begin{bmatrix}
0 & ds & 0 & \cdots & 0 & 0\\
s & 0 & (d-1)s & \cdots & 0 & 0\\
0 & 2s & 0 & \ddots & 0 & 0\\
\vdots & \vdots & \ddots & \ddots & 2s & 0\\
0 & 0 & 0 & (d-1)s & 0 & s\\
0 & 0 & 0 & 0 & ds & 0
\end{bmatrix}
$$
Expanding its characteristic polynomial $\det(XI_{d+1}-B_d(s))$ on rows it follows that this is exactly our polynomial $U_{d+1}(X,t)$.
Since the eigenvalues of $B_d(s)$ are $\alpha_j = (d-2j) s$, $0\leq j\leq d$, it follows that $U_{d+1}(X,t) = \prod_{j=0}^d (X-\alpha_j)$. Grouping the factors in pairs $\pm k$, $k\in S_d$, for $d$ odd,  and in pairs $\pm k$ plus an extra $X$ factor, for $d$ even, it follows that
$$
U_{d+1}(X,t) = \prod_{k\in S_d} (X^2-k^2 t),  \text{ for $d$ odd},
$$
and
$$
U_{d+1}(X,t) = X \prod_{k\in S_d} (X^2-k^2 t),  \text{ for $d$ even}.
$$
This is what we wanted to prove.
\end{proof}

\vspace{.2cm}
\begin{remark}\label{R-R1Btheta}
Conform our framework in Section~\ref{sec-3-framework}, equation (\ref{eqn-Rs}), let $P_{d,d+1} = \sum_{m=0}^{\lfloor (d+1)/2 \rfloor}
 t^m R_m^{B}(\theta)$. Then $R_0^B(\theta)=\theta^{d+1}$ and it can be proved that

\begin{equation}\label{eqn-R1Btheta}
R_1^B(\theta) =\sum_{j=0}^{d-1}(d-j)\binom{d+2}{j} \theta^j
= (\theta+1)^{d+1}(d-2\theta)+\theta^{d+1}(d+2+2\theta).
\end{equation}
$R_1^B$ has degree $d-1$ and $R_1^B(0)=d$.
\end{remark}

\begin{remark}
Since $P_{d,d+1}^{\ddagger}=(-1)^{d+1} P_{d,d+1}$, $R_m^B(\gt)$ are symmetric with respect to $-m/2$. See the discussion after Lemma~\ref{L-SA-symmetry}. Let $r={\lfloor (d+1)/2\rfloor}$. Per our framework, denoting
$ Q_k^B= R_{r-k}^B $, $Q_0^B$ is symmetric with respect to $-r/2$.
If $d$ is odd, then $\deg(Q_0^B)=d+1-2r=0$, so
$$
Q_0^B=(-1)^{r} e_r(S_d) = (-1)^{r} \prod_{k=1, k\equiv d\!\!\! \pmod 2}^d  k^2.
$$
If $d$ is even, then $\deg(Q_0^B)=d+1-2r=1$, so by the symmetry above
$$
Q_0^B=(-1)^{r} e_r(S_d) (\gt+r/2).
$$
\end{remark}

\vspace{.2cm}
Putting together all the results of this subsection, we obtain the following:

\begin{proposition}\label{P:y-recurrence}
The sequence $(y_n)_n=(y_n^{(d)})_n$ satisfies the recurrence
\begin{equation}\label{eq:y-recurrence}
 \sum_{k=0}^r Q_k^B(n+k) \, y_{n+k}=0
\end{equation}
and the initial conditions
$$
\sum_{k=0}^m  Q_{r-m+k}^B(k) \, y_{k}= 0, \text{ for }0\leq m \leq r-1.
$$
Also
\begin{enumerate}
    \item $\deg({Q_k^B})=d+1+2k-2r$;
    \item $Q_{k}^B(-n-r+k)=(-1)^{d+1} Q_{k}^B(n), \text{ for all } 0\leq k \leq r$.
    \item The leading coefficient of $Q_k^B(n)$ is $(-1)^{r-k} e_{r-k}(S_d)$.
    \item $Q_r^B(n)=n^{d+1}$.
    \item
    If $d$ is odd, then $Q_0^B(n)=(-1)^{r} e_r(S_d)$.
    If $d$ is even, then $Q_0^B(n)=(-1)^{r} e_r(S_d) (n+r/2)$.
    \item $Q_{r-1}^B(n)=R_1^B(n)$ are as noted in Remark~\ref{R-R1Btheta}.
\end{enumerate}
\end{proposition}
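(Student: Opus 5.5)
The plan is to read everything off the operator identity $P_{d,d+1}(Y_d)=0$ (the Corollary above), with $Y_d=\phi^d=\sum_n y_n t^n$. We write $P_{d,d+1}=\sum_{m=0}^{r} t^m R_m^B(\theta)$ as in Remark~\ref{R-R1Btheta}, with $r=\lfloor (d+1)/2\rfloor$; the Proposition guarantees that no power $t^m$ with $m>r$ occurs. We then apply the framework of Section~\ref{sec-3-framework} verbatim in the variable $t$. Since $\theta t^n=nt^n$, the equation $\sum_m t^m R_m^B(\theta)Y_d=0$ becomes $\sum_n\sum_m R^B_m(n)y_n t^{n+m}=0$. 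Extracting the coefficient of $t^{n+r}$ for $n\ge 0$ and setting $Q_k^B=R^B_{r-k}$ gives (\ref{eq:y-recurrence}). The coefficients of $t^{m}$ for $0\le m\le r-1$ give the stated initial conditions, exactly as in (\ref{rec0})--(\ref{rec0-ic}).

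Next we derive the listed properties. Item~4 holds because $R_0^B(\theta)=\theta^{d+1}$: by the Proposition, the $t^0$ part of $P_{d,d+1}$ is just the one-cell tiling $\theta^{d+1}$. Items~1 and~3 come directly from the Proposition. The coefficient of $t^m$ has top $\theta$-term $(-1)^m e_m(S_d)\theta^{d+1-2m}$, and $e_m(S_d)\neq 0$ for $m\le r=|S_d|$ because all elements of $S_d$ are positive. Hence $\deg R^B_m=d+1-2m$, and with $m=r-k$ this gives $\deg Q_k^B=d+1-2r+2k$ with leading coefficient $(-1)^{r-k}e_{r-k}(S_d)$.

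For item~2 we use part~2 of the Symmetric power recursion lemma, $P_{d,d+1}^{\ddagger}=(-1)^{d+1}P_{d,d+1}$. As computed in Section~\ref{sec-4-SA-ASA}, $(t^mR_m(\theta))^{\ddagger}=R_m(-\theta)t^m=t^mR_m(-\theta-m)$. So the identity is equivalent to $R_m^B(-\theta-m)=(-1)^{d+1}R_m^B(\theta)$. Substituting $m=r-k$ gives $Q_k^B(-n-r+k)=(-1)^{d+1}Q_k^B(n)$.

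Item~5 follows from items~1--3. For $d$ odd, $Q_0^B$ is a constant equal to its leading coefficient. For $d$ even, it is linear with root at the centre of symmetry $-r/2$, which is forced by item~2 with $k=0$ and odd parity. Item~6 is a restatement, since $Q^B_{r-1}=R^B_1$.

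The main obstacle is only the explicit formula for $R^B_1$ in item~6, if one wants to verify it rather than cite Remark~\ref{R-R1Btheta}. To do this I would sum over tilings with a single 2-cell. A 2-cell at position $(j,j+1)$ contributes $-\lambda_j\,\theta^{d-j}t\,\theta^{j-1}=-\lambda_j t(\theta+1)^{d-j}\theta^{j-1}$. Summing over $j$ with $\lambda_j=j(d-j+1)$ gives a finite sum, which I would then simplify with a generating-function manipulation into the stated closed form. This check is not needed for the rest of the argument.
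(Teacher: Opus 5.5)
Your derivation of the recurrence, the initial conditions, and items 1--5 is correct, and it is essentially the paper's own argument. The paper simply assembles the Corollary $P_{d,d+1}(Y_d)=0$, the leading-coefficient Proposition, the symmetry $P_{d,d+1}^{\ddagger}=(-1)^{d+1}P_{d,d+1}$ read through $(t^mR_m(\theta))^{\ddagger}=t^mR_m(-\theta-m)$, and the coefficient extraction of Section~\ref{sec-3-framework}.

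The one step that cannot work as you describe it is your planned check of item 6. Your tiling expression is right:
\[
R_1^B(\theta)=-\sum_{j=1}^{d} j(d-j+1)\,(\theta+1)^{d-j}\,\theta^{j-1}.
\]
So every coefficient of $R_1^B$ is nonpositive, $R_1^B(0)=-\lambda_1=-d$, and the leading coefficient is $-\binom{d+2}{3}=-e_1(S_d)$. This is exactly what your item 3 gives for $k=r-1$.

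Remark~\ref{R-R1Btheta} states the opposite sign, including $R_1^B(0)=d$. Hence no manipulation will turn your sum into ``the stated closed form.'' The appendix agrees with you: for $d=3$ it has $R_1=Q_1=-(10\theta^2+10\theta+3)$. What your sum actually equals is
\[
R_1^B(\theta)=-\sum_{j=0}^{d-1}(d-j)\binom{d+2}{j}\theta^j
=-\bigl[(\theta+1)^{d+1}(d-2\theta)+\theta^{d+1}(d+2+2\theta)\bigr].
\]
A short way to see this is the following. With $a=\theta+1$ and $b=\theta$, your sum is the complete homogeneous polynomial $h_{d-1}(a,a,b,b)=\partial_a\partial_b\,\frac{a^{d+2}-b^{d+2}}{a-b}$. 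Doing the two derivatives and then setting $a-b=1$ gives $a^{d+1}(d+2-2a)+b^{d+1}(d+2+2b)$.

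So item 6 holds only with this sign corrected. Citing the Remark verbatim, as both you and the paper do, imports a sign error rather than proving the formula.
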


\subsection{Transitioning from $(y_n^{(d)})_n$ to $(x_n^{(d)})_n$}
\label{subS-x-to-y}
Substituting $y_{n+k}=x_{n+k}/((n+k)!)^2$ in equation~(\ref{eq:y-recurrence}), noting that $Q_r^B(n+k)=(n+k)^{d+1}$, simplifying $(n!)^2$, and clearing denominators, we obtain that $(x_n)_n$ satisfies the recurrence
\begin{equation}\label{eq:x-recurrence1}
 \sum_{k=0}^r Q_k(n+k) \, x_{n+k}=0,
\end{equation}
with the initial conditions
$$
\sum_{k=0}^m  Q_{r-m+k}(k) \, x_{k}= 0, \text{ for }0\leq m \leq r-1.
$$
We have
$Q_r(n)=n^{d-1}$,
$Q_{r-1}(n)=Q_{r-1}^B(n)$, and
$Q_{r-k}(n)=\prod_{j=1}^{k-1}(n+j)^2 \, Q_{r-k}^B(n)$,
since in the recurrence~(\ref{eq:x-recurrence1}) the coefficient of $x_{n+k}$ is $Q_k(n+k)$. It follows that the degrees of all $Q_k$ are $d-1$,  $Q_0(n)= (-1)^{r} e_r(S_d) \prod_{j=1}^{r-1}(n+j)^2$, when $d$ is odd, and $Q_0(n)= (-1)^{r} e_r(S_d) (n+r/2) \prod_{j=1}^{r-1}(n+j)^2$, when $d$ is even.

Relation~2 from Proposition~\ref{P:y-recurrence} shows that $Q_k^B$ is symmetric with respect to $-(r-k)/2$, so it is an odd or an even polynomial in $(n+(r-k)/2)$, depending on the parity of $d$. $Q_r$ keeps the 0-symmetry and the same parity, $Q_{r-1}$ is unchanged, while for $2\leq k \leq r$ the multiplier $\prod_{j=1}^{k-1}(n+j)^2$ preserves the symmetry of $Q_{r-k}^B$. And since the multiplier is a square, the global parity of $Q_k(n)$ is the same as that of $Q_k^B(n)$. Hence we proved the following:

\begin{proposition}\label{P:x-recurrence}
The sequence $(x_n)_n=(x_n^{(d)})_n$ satisfies the recurrence
\begin{equation}\label{eq:x-recurrence}
 \sum_{k=0}^r Q_k(n+k) \, x_{n+k}=0
\end{equation}
and the initial conditions
$$
\sum_{k=0}^m  Q_{r-m+k}(k) \, x_{k}= 0, \text{ for }0\leq m \leq r-1.
$$
Also
\begin{enumerate}
    \item $\deg({Q_k})=d-1$;
    \item $Q_{k}(-n-r+k)=(-1)^{d-1} Q_{k}(n), \text{ for all } 0\leq k \leq r$.
    \item The leading coefficient of $Q_k(n)$ is $(-1)^{r-k} e_{r-k}(S_d)$.
    \item $Q_r(n)=n^{d-1}$.
    \item For $0\leq k \leq r-2$,
$$
\prod_{j=1}^{r-k-1}(n+j)^2 \text{ divides } Q_k(n).
$$
    \item
    If $d$ is odd, then $Q_0(n)= (-1)^{r} e_r(S_d) \prod_{j=1}^{r-1}(n+j)^2$.
    If $d$ is even, then $Q_0(n)= (-1)^{r} e_r(S_d) (n+r/2) \prod_{j=1}^{r-1}(n+j)^2$.
    \item $Q_{r-1}(n)=R_1^B(n)$ are as noted in Remark~\ref{R-R1Btheta}.
\end{enumerate}
\end{proposition}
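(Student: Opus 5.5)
The plan is to obtain everything from Proposition~\ref{P:y-recurrence} by substituting $y_m=x_m/(m!)^2$ and clearing factorials with one uniform multiplier. Fix $n\ge 0$ and multiply the relation (\ref{eq:y-recurrence}) by $((n+r-1)!)^2$.

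First consider the term $k=r$. Since $Q_r^B(n+r)=(n+r)^{d+1}$ by Part~4 of Proposition~\ref{P:y-recurrence}, this term becomes $(n+r)^{d-1}x_{n+r}$. For $k<r$ the term becomes $Q_k^B(n+k)\prod_{j=k+1}^{r-1}(n+j)^2\, x_{n+k}$. Writing $m=n+k$, this suggests the definitions
\[
Q_r(m)=m^{d-1},\qquad Q_k(m)=Q_k^B(m)\prod_{j=1}^{r-1-k}(m+j)^2 \quad (0\le k\le r-1).
\]
With these definitions (\ref{eq:x-recurrence}) holds. In particular the product is empty for $k=r-1$, so $Q_{r-1}=Q_{r-1}^B=R_1^B$; this gives Part~7 via Remark~\ref{R-R1Btheta}.

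The initial conditions are handled the same way. For the $m$-th condition, with $1\le m\le r-1$, multiply by $((m-1)!)^2$. The term $Q_r^B(m)y_m$ becomes $m^{d-1}x_m$. For $k<m$, the term $Q^B_{r-m+k}(k)y_k$ becomes $Q^B_{r-m+k}(k)\prod_{j=1}^{m-k-1}(k+j)^2x_k=Q_{r-m+k}(k)x_k$, which matches the definition above because $r-1-(r-m+k)=m-k-1$. The case $m=0$ is $Q_r(0)x_0=0$, which holds trivially since $d\ge 2$.

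The listed properties then follow one by one.

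\emph{Degrees.} For $k<r$, Part~1 of Proposition~\ref{P:y-recurrence} gives $\deg Q_k=(d+1+2k-2r)+2(r-1-k)=d-1$. For $k=r$ the degree is $d-1$ directly. This is Part~1.

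\emph{Leading coefficients and divisibility.} The multiplier $\prod_j (m+j)^2$ is monic, so the leading coefficients are unchanged and Part~3 is inherited. Part~5 is immediate from the definition.

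\emph{Symmetry.} Under $m\mapsto -m-r+k$ each factor $m+j$ goes to $-(m+(r-k-j))$, and $j\mapsto r-k-j$ permutes $\{1,\dots,r-k-1\}$. Since the factors appear squared, the multiplier is invariant under this map. Combining this with Part~2 of Proposition~\ref{P:y-recurrence} gives $Q_k(-m-r+k)=(-1)^{d+1}Q_k(m)=(-1)^{d-1}Q_k(m)$. For $k=r$ the identity is just the parity of $m^{d-1}$. This is Part~2.

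\emph{Explicit $Q_0$.} Part~6 is obtained by inserting Part~5 of Proposition~\ref{P:y-recurrence} into the definition with $k=0$.

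I expect no real obstacle here. The only delicate point is the index bookkeeping: choosing the common multiplier $((n+r-1)!)^2$ so that every coefficient becomes a polynomial, and checking that the same $Q_k$ serve both in the recurrence and in the initial conditions. I would verify the latter directly, as above, rather than through the framework dictionary of Section~\ref{sec-3-framework}. The small cases $d=2$ (where $r=1$) and $k=r-1$ (empty product) should be checked separately.
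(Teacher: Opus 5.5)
Your proposal is correct and follows the paper's own argument. You substitute $y_m=x_m/(m!)^2$ into the recurrence of Proposition~\ref{P:y-recurrence} and clear denominators, which gives $Q_{r-k}(n)=\prod_{j=1}^{k-1}(n+j)^2\,Q^B_{r-k}(n)$, and you then read off degrees, leading coefficients, divisibility, $Q_0$, $Q_{r-1}$ and the symmetry from the corresponding properties of the $Q_k^B$. Your explicit check with the multiplier $((m-1)!)^2$, showing that the same polynomials also satisfy the initial conditions, fills in a step the paper only asserts.
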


\vspace{.3cm}
If $R_k(\gt)=Q_{r-k}(\gt)$ and $L_\theta =\sum_{k=0}^r z^k R_k(\theta)$ then $L_\theta(F_d)=0$, $L_\theta$ has order $d-1$, it is Fuchsian at $0$ and $\infty$, and it is MUM at $0$. The symbol at $\infty$ is given by $Q_0(-\lambda)=0$, so it is precisely as given in Part~6 of Theorem~\ref{T-LF-properties}.

The symmetry given by Part 2 of Proposition~\ref{P:x-recurrence} shows that $L_\theta^\ddagger=(-1)^{d-1} L_\gt$. If $L$ is in CPF standard form, the MUM condition at 0 shows that $m=q-1$, so $L_\gt=zL$, hence $L$ is formally self-adjoint, $L^*=(-1)^{d-1} L$. See the discussion after Lemma~\ref{L-SA-symmetry}.

If we write $L_{\theta} = \sum_{k=0}^{d-1} \widetilde{R}_k(z) \theta^k$, Remark~\ref{R:spc} shows that the finite non-zero singularities are the solutions of $\widetilde{R}_{d-1}(z)=0$ and
$\widetilde{R}_{d-1}(z)=\sum_{k=0}^r [\theta^{d-1}]R_k \, z^k$. Since the degree of $R_k$ is $d-1$, the coefficients above are the leading coefficients, so they equal $(-1)^k e_k(S_d)$, $0 \leq k \leq r$. Hence
\begin{equation}
 \widetilde{R}_{d-1}(z) = (-1)^r \prod_{n\in S_d} (nz-1).
\end{equation}
This shows that the finite non-zero singularities are simple and are precisely the inverses of the elements in $S_d$, namely $1/k^2$, $1\leq k \leq d$, $k\equiv d \pmod 2$. In particular, $L$ is Fuchsian at these singularities as well, with symbol $\{0, 1, \cdots, d-3,(d-3)/2\}$.

We  proved Parts~1--6 in the Theorem~\ref{T-LF-properties}. Since the extra symbol of each non-zero singularity is $(d-3)/2$, Lemma~\ref{L-structure} shows that Part~7 holds as well.

\subsection{The Hadamard convolution}\label{subS-Hadamard}
From our paper \cite[Equation (17)]{DS1}, we recall that $F_{d+1}$ is given inductively by

\begin{equation} \label{eq:ConvExpN}
F_{d+1}(w) = \frac{1}{2\pi i}\int_{|t|=1/s_d}
    F_d(t)P(t,w)^{-1/2}dt, \text{ for } P(t,w)=w^2-wt(2w+2)+t^2(w-1)^2,
\end{equation}
where $s_d$ is any number satisfying
$s_d<\tfrac{1}{(d+1)^2}$.

We notice that $P(w,t)=P(t,w)$ and $P$ has roots

\begin{equation} \label{eq:roots}
t_1(w)=\frac{w}{(\sqrt w +1)^2} \text{ and }
t_2(w)=\frac{w}{(\sqrt w -1)^2},
\end{equation}
where changing the determination of the square root, just switches them. Also $t_1(0)=t_2(0)=0$, $t_1(\infty)=t_2(\infty)=1$, $t_1(1)=\frac{1}{4}$, $t_2(1)=\infty$, in the usual limit sense in the latter cases (i.e., as $w \to 1$ the roots $t_1(w) \to \frac{1}{4}$, $t_2(w) \to \infty$). It is the case that $t_{1,2}(w) \in [0, \infty]$ if and only if $w \in [0, \infty]$.

Let $b=1/k^2$ be one of the singularities of $L_d$ (the annihilating operator of $F_{d+1})$. Assume first that $b\neq 1/(d+1)^2$ or $b\neq 1$, when $z=1$ is a singularity. Then $b$ comes from the singularities
$a_{k-1}=1/(k-1)^2$ and $a_{k+1}=1/(k+1)^2$ of $F_d$ and the Hadamard deformation as described in \cite{DS1}. Locally, near $b$,
$$
t_1(w)-a_{k+1} = \lambda_1 (w-b)+O((w-b)^2), \text { where } \lambda_1=t_1'(1/k^2)=k^3/(k+1)^3.
$$
Similarly
$$
t_2(w)-a_{k-1} = \lambda_2 (w-b)+O((w-b)^2), \text { where } \lambda_2=t_2'(1/k^2)=k^3/(k-1)^3.
$$
So $\lambda_1$ and $\lambda_2$ are positive. For the case $b=1/(d+1)^2$, only $t_2$ and $a_{k-1}$ are involved, since $1/(k+1)^2$ is not a singularity of $F_d$, so we only have $\lambda_2$ which is positive.

As seen in \cite{DS1}, $b=1$ is a singularity only when one goes from $F_{\text{even}}$ to $F_{\text{odd}}$ and it comes from the singularities $1/4$ and $\infty$. However at infinity the polynomial has a double root ($t_{1,2}\rightarrow 1$ as $w\rightarrow \infty$), which means that $P^{-1/2}$ is a regular function near $\infty$, so it does not contribute to the singularity in the convolution. For this case we only have $\lambda_1$, which is positive.

Assuming inductively the result, by ignoring the local coefficients, the Transfer Theorem for Hadamard composition \cite[Thm.VI.10]{FS} implies that the convolutions behaves as expected (square type singularity goes to logarithmic singularity, and logarithmic singularity goes to square type singularity) and the coefficients are given by positive $\beta$ function values, $\beta_1$ and $\beta_2$, from the $F_d$ singularities $a_{k-1}$ and $a_{k+1}$, respectively. Putting the above together the coefficient of the singular term of the expansion of $F_{d+1}$ at $b$ is
$c_1 \lambda_1 \beta_1 + c_2 \lambda_2 \beta_2$, where $c_1$ and $c_2$ are the coefficients of the singular part of $F_d$ at $a_{k-1}$ and $a_{k+1}$, respectively. (In the special cases we have only one coefficient, as above.) Since $F_2(z)=(1-4z)^{-1/2}$ the initial coefficient of the singular part at $z=1/4$ is positive, hence the coefficients of the singular parts of $F_3$ at $z=1$ and $z=1/9$ are positive. By induction, all the involved coefficients are positive, so $c_1 \lambda_1 \beta_1 + c_2 \lambda_2 \beta_2$ is positive, so $F_{d+1}$ has a singularity at $1/k^2$, for all $1\leq k \leq d+1$, of the same parity as $d+1$. This proves Part~8 of Theorem~\ref{T-LF-properties}.

\subsection{Obtaining the results for $(A_{n}^{(d)})_n$ from those for $(x_{n}^{(d)})_n$}\label{subS-A-from-x}

\begin{proposition}\label{P:A-recurrence}
With $r=\lfloor \frac{d+1}{2} \rfloor$, the sequence $(A_{n})_n=(A_{n}^{(d)})_n$ satisfies the recurrence
\begin{equation}\label{eq:A-recurrence}
 \sum_{k=0}^r Q_k^A(n+k) \, A_{n+k}=0
\end{equation}
and the initial conditions
$$
\sum_{k=0}^m  Q_{r-m+k}^A(k) \, A_{k}= 0, \text{ for }0\leq m \leq r-1.
$$
Also
\begin{enumerate}
    \item $\deg({Q_k^A})=d$;
    \item $Q_{k}^A(-n-r+k)=(-1)^{d} Q_{k}^A(n), \text{ for all } 0\leq k \leq r$.
    \item The leading coefficients of $Q_k^A(n)$ are $(-1)^{r-k} e_{r-k}(S_d^A)$, where $S_d^A=4S_d$.
    \item $Q_r^A(n)=n^{d}$.
    \item For $0\leq k \leq r-1$,
$$
\prod_{j=1}^{r-k}(2n+2j-1) \prod_{j=1}^{r-k-1}(n+j) \text{ divides } Q_k^A(n).
$$
    \item
    If $d$ is odd, then $Q_0^A(n)= (-1)^{r} e_r(S_d^A) \prod_{j=1}^{r}(n+j-\tfrac12) \prod_{j=1}^{r-1}(n+j) $.
    If $d$ is even, then $Q_0^A(n)= (-1)^{r} e_r(S_d^A) (n+r/2) \prod_{j=1}^{r}(n+j-\tfrac12) \prod_{j=1}^{r-1}(n+j)$.
    \item $Q_{r-1}^A(n) = 2(2n+1) R_1^B(n)$ are as computed above.
\end{enumerate}
\end{proposition}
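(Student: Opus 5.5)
The plan is to obtain the $A$-recurrence directly from the $x$-recurrence of Proposition~\ref{P:x-recurrence} via the substitution $x_n=A_n/\binom{2n}{n}$, in exactly the same way that Subsection~\ref{subS-x-to-y} passed from $y_n$ to $x_n$. The ratio to track is
$$
\frac{\binom{2n+2k}{n+k}}{\binom{2n}{n}}=\prod_{j=1}^{k}\frac{2(2n+2j-1)}{n+j}.
$$
We write $x_{n+k}=A_{n+k}/\binom{2n+2k}{n+k}$ in (\ref{eq:x-recurrence}), multiply through by $\binom{2n+2r}{n+r}$, and divide by $\binom{2n}{n}$-type common factors. The coefficient of $A_{n+k}$ is then $Q_k(n+k)\prod_{j=k+1}^{r}\frac{2(2n+2j-1)}{n+j}$. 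To clear the denominators $\prod_{j=k+1}^r(n+j)$, we multiply the whole relation by $(n+r)$. This is the natural choice, because $Q_r(n+r)=(n+r)^{d-1}$ and multiplying it by $(n+r)$ gives $(n+r)^d$.

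Concretely, I would set
$$
Q_k^A(m)=Q_k(m)\,\frac{(m+r-k)\prod_{j=1}^{r-k}2(2m+2j-1)}{\prod_{j=1}^{r-k}(m+j)}
$$
and use Part~5 of Proposition~\ref{P:x-recurrence} to show this is a polynomial. That part says $\prod_{j=1}^{r-k-1}(m+j)^2$ divides $Q_k(m)$ for $k\le r-2$. Hence, after cancelling one copy of $\prod_{j=1}^{r-k-1}(m+j)$ against the denominator, one copy survives. The remaining denominator factor $(m+r-k)$ cancels against the extra multiplier.

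The remaining cases are handled directly. For $k=r$ we get $Q_r^A(m)=m\cdot m^{d-1}=m^d$. For $k=r-1$ we get $Q_{r-1}^A(m)=Q_{r-1}(m)\cdot 2(2m+1)\cdot (m+1)/(m+1)=2(2m+1)R_1^B(m)$, which is Part~7.

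The remaining parts are bookkeeping.
\begin{itemize}
\item Degrees (Part~1): $\deg Q_k^A=(d-1)+1+(r-k)-(r-k)=d$.
\item Divisibility (Part~5): the surviving factors give $\prod_{j=1}^{r-k}(2m+2j-1)\prod_{j=1}^{r-k-1}(m+j)$ dividing $Q_k^A$.
\item $Q_0^A$ (Part~6): substitute the explicit $Q_0$ from Part~6 of Proposition~\ref{P:x-recurrence}.
\item Leading coefficients (Part~3): multiplying $(-1)^{r-k}e_{r-k}(S_d)$ by $4^{r-k}$ gives $(-1)^{r-k}e_{r-k}(4S_d)$.
\item Initial conditions: they transfer the same way, or follow from the recurrence holding for the coefficients of the analytic solution through the dictionary of Section~\ref{sec-3-framework}.
\end{itemize}

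The step needing the most care is the symmetry in Part~2, $Q_k^A(-n-r+k)=(-1)^dQ_k^A(n)$. The plan is to check that each of the three multiplier pieces behaves well under the reflection $n\mapsto -n-(r-k)$, whose center is $-(r-k)/2$:
\begin{itemize}
\item The set $\{2n+2j-1\}_{j=1}^{r-k}$ is mapped to its negatives, contributing a sign $(-1)^{r-k}$.
\item The quotient $(n+r-k)/\prod_{j=1}^{r-k}(n+j)=1/\prod_{j=1}^{r-k-1}(n+j)$ turns into $1/\prod(-(n+r-k-j))$, which is the same product up to the sign $(-1)^{r-k-1}$.
\end{itemize}
The net extra sign is $-1$. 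Combined with $Q_k(-n-r+k)=(-1)^{d-1}Q_k(n)$, this gives $(-1)^d$, as required.
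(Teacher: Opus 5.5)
Your proposal is correct and follows the paper's own route. You substitute $x_n=A_n/\binom{2n}{n}$ into the $x$-recurrence, clear denominators using Part~5 of Proposition~\ref{P:x-recurrence}, and arrive at exactly the paper's formula $Q_k^A(n)=2^{r-k}Q_k(n)\prod_{j=1}^{r-k}(2n+2j-1)\big/\prod_{j=1}^{r-k-1}(n+j)$ (your factor $(m+r-k)$ cancels the top denominator term), from which all seven parts are read off; your explicit sign count for the symmetry in Part~2, which the paper only asserts, is a welcome addition.
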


\begin{proof}
Since $x_n=A_n/\binom{2n}{n}$ it follows that
$$
\sum_{k=0}^r Q_k(n+k) \, A_{n+k} \, \frac{(n+k)!^2}{(2n+2k)!}=0,
$$
with the corresponding initial conditions. After simplification we obtain:
$$
\sum_{k=0}^r Q_k(n+k) \, A_{n+k} \,\frac{(n+k)\cdots (n+1)}{2^k (2n+2k-1)\cdots (2n+1)}=0.
$$
Using Part 5 of Proposition~\ref{P:x-recurrence}, we get that $Q_k(n+k) (n+k)\cdots (n+1)$ is divisible by $(n+r-1)\cdots (n+1)$, so we can simplify all the numerators above except an $n+r$ in the $r$th term. Getting rid of the denominators we obtain (\ref{eq:A-recurrence}), with the appropriate initial conditions, where

\begin{equation}\label{eq-QkA}
Q_r^A(n)=n^d \quad \text{ and } \quad
Q_k^A(n) = 2^{r-k} Q_k(n) \frac{\prod_{j=1}^{r-k}(2n+2j-1)}{\prod_{j=1}^{r-k-1}(n+j)}.
\end{equation}
It follows that $\deg(Q_k^A)=\deg(Q_k)+1=d$ and the symmetries in Part~2 hold. Hence the leading coefficients of $Q_k^A(n)$ are $(-1)^{r-k} 4^{r-k} e_{r-k}(S_d)$, which are by definition $(-1)^{r-k} e_{r-k}(S_d^A)$, which proves Part~3. Finally, the equation~(\ref{eq-QkA}) and the Parts~5--7 from Proposition~\ref{P:x-recurrence} imply the remaining Parts~5--7.
\end{proof}

Parts 1--7 of Theorem~\ref{T-LA-properties} follow from Proposition~\ref{P:A-recurrence} in the same way that the corresponding parts of Theorem~\ref{T-LF-properties} follow from Proposition~\ref{P:x-recurrence}. Part~8 of Theorem~\ref{T-LA-properties} follows from Part~8 of Theorem~\ref{T-LF-properties} and the Transfer Theorem from \cite{FS} since $A_d=F_d*(1-4z)^{-1/2}$ so each singularity $1/4k^2$ of $A_d$ comes just from the singularity $1/k^2$ of $F_d$.

\section{Appendix: Explicit formulas ($P$-symbols, recurrences, and the $L$-operators)}\label{sec-Appendix}

Here we collect the essential formulas in dimensions $d=3$ to $8$. We dedicate a subsection to each dimension and within that subsection we omit in the notation the reference to the dimension (we denote $x_n$ instead of $x_n^{(d)}$ etc.) The low dimensional cases ($d=3$, $4$, and $5$) show explicitly the framework used in our paper and emphasize the close relationship between $F_d$ and $A_d$ (between $(x_n)_n$ and $(A_n)_n$).\footnote{In dimensions $d=3$, $4$, and $5$, for a different writing of the recurrence relations and of the ODEs satisfied by the $F_d$'s and the $A_d$'s, see Remarks~4.1 and 4.3 in \cite{DS1}, respectively. For the sequence $(x_n^{(6)})_n$, the recurrence and the ODE appear in \cite[equations (11e) and (12e)]{GP93}.}

As of July 2026, part of the freely available Mathematica package RISCErgoSum,\footnote{RISCErgoSum is a collection of packages created at the Research Institute for Symbolic Computation (RISC), Linz, Austria. See \href{https://www3.risc.jku.at/research/combinat/software/ergosum/index.html}{RISCErgoSum}.} the {\tt GuessMinRE} command was able to obtain the recurrences up till dimension $d=12$, and the {\tt GuessMinDE} command was able to guess the ODE up to dimension $d=8$ for the $(x_n)_n$ sequence and up to dimension $d=7$ for the $(A_n)_n$ sequence.\footnote{Not surprising, because the $F_{d+1}(z)$ and $A_d(z)$ have similar properties.}

A different way to obtain these formulas is to use the explicit form of the three polynomials $Q_0$, $Q_{r-1}$, and $Q_r$, together with the particular symmetries of the rest of the polynomials, as explained in Propositions~\ref{P:x-recurrence} and \ref{P:A-recurrence}, to reduce the size of the linear system satisfied by the coefficients of the recurrence, and consequently making it solvable.

\subsection{The case $d=3$} 

\vspace{.2cm}
\[
\begin{array}{|c|c|c|c|c|c|c|}
\hline
 & \text{Finite singularities} & \text{Exponents at each finite point} & \text{Exponents at } \infty & q & m & r \\
\hline
F & 1, 1/9 & 0, 0 & 1, 1 & 2 & 1 & 2 \\
\hline
A & 1/4, 1/16 & 0, \tfrac12,1 & \tfrac12,1,\tfrac32 & 3 & 2 & 2 \\
\hline
\end{array}
\]

\vspace{.2cm}
\[
\begin{array}{|c|l|}
\hline
 & \text{Recurrences} \\
\hline
F & (n+2)^2 x_{n+2} - (10 (n+1)^2 + 10 (n+1) + 3) x_{n+1} + 9 (n+1)^2 x_{n}=0 \\ & \\
\hline
A &
\begin{aligned}
(n+2)^3 A_{n+2}
  &- 2 (2(n+1)+1)(10 (n+1)^2 + 10 (n+1) + 3) A_{n+1} \\
  &+ 4\cdot 36 \cdot (n+\tfrac12)(n+1)(n+\tfrac32) A_{n}=0
\end{aligned}
 \\
\hline
\end{array}
\]

\vspace{.2cm}
\[
\begin{array}{|c|l|}
\hline
 & \text{The $L_{\gt} = z L$ operator in CPF (with ${R_k(\gt)}$)} \\
\hline
F &
z^2 \cdot [9(\gt+1)^2] -z\cdot [10\,\gt^2+10\,\gt+3]+\gt^2 \\ & \\
\hline
A & z^2 \cdot [4\cdot 36\cdot (\gt+\tfrac12)(\gt+1)(\gt+\frac32)]-z\cdot[2(2\gt+1)(10\,\gt^2+10\,\gt+3)]+\gt^3 \\ & \\
\hline
\end{array}
\]

\vspace{.2cm}
\[
\begin{array}{|c|l|}
\hline
 & \text{The $L_{\gt} = z L$ operator (with $\widetilde{R_k}(z)$)} \\
\hline
F & (z-1)(9z-1) \, \gt^2 + 2z(9z-5) \, \gt +3z(3z-1) \\ & \\
\hline
A &
(4z-1)(36z-1)\,\theta^3 + 12z(36z-5)\,\theta^2
  +4z(99z-8)\,\theta+6z(18z-1) \\ & \\
\hline
\end{array}
\]

\vspace{.2cm}
\[
\begin{array}{|c|l|}
\hline
 & \text{The $L$ operator in standard CPF} \\
\hline
F & z(z-1)(9z-1) D^2 +(27z^2-20z+1) D +3(3z-1) \\ & \\
\hline
A &
\begin{aligned}
z^2 &(4z-1)(36z-1) D^3 + 3 z (288z^2-60z+1) D^2 \\
    &+ (972z^2-132z+1) D+6(18z-1)
\end{aligned}
 \\
\hline
\end{array}
\]

\setcounter{dimension}{3}

\newpage
\subsection{The case $d=4$} 

\vspace{.2cm}
\[
\begin{array}{|c|c|c|c|c|c|c|}
\hline
 & \text{Finite singularities} & \text{Exponents at each finite point} & \text{Exponents at } \infty & q & m & r \\
\hline
F & 1/4, 1/16 & 0, \tfrac12, 0 & 1, 1, 1 &
\text{\thedimension} & 2 & 2 \\
\hline \stepcounter{dimension}
A & 1/16, 1/64 & 0, 1, 1, 2 & \tfrac12,1,1,\tfrac32 &
\text{\thedimension} & 3 & 2 \\
\hline
\end{array}
\]

\vspace{.2cm}
\[
\begin{array}{|c|l|}
\hline
 & \text{Recurrences} \\
\hline
F &
\begin{aligned}
(n+&2)^3 x_{n+2} \\
 & - 2 [2(n+1)+1] [5 (n+1)^2 + 5 (n+1) + 2] x_{n+1} \\
 & + 64 (n+1)^3 x_{n}=0
\end{aligned}
\\
\hline
A &
\begin{aligned}
(n+&2)^4 A_{n+2} \\
 & - 4 [2(n+1)+1]^2 [5 (n+1)^2 + 5 (n+1) + 2] A_{n+1} \\
  & + 256 (2n+1)(n+1)^2(2n+3) A_{n}=0
\end{aligned}
\\
\hline
\end{array}
\]

\vspace{.2cm}
\[
\begin{array}{|c|l|}
\hline
 & \text{The $L_{\gt}=zL$ operator in CPF (with ${R_k(\gt)}$)} \\
\hline
F &
z^2 \cdot [64(\theta+1)^3] -z\cdot
[ 2(2\theta+1)(5\theta^2+5\theta+2) ]+\gt^3 \\ & \\
\hline
A &
z^2\cdot [256 (2\theta+1) (\theta+1)^2 (2\theta+3)]
 - z \cdot [4(2\theta+1)^2 (5\theta^2+5\theta+2)] + \theta^4
 \\ & \\
\hline
\end{array}
\]

\vspace{.2cm}
\[
\begin{array}{|c|l|}
\hline
 & \text{The $L_{\gt}=zL$ operator (with $\widetilde{R_k}(z)$)} \\
\hline
F &
\begin{aligned}
(4z-1)&(16z-1)\,\theta^3 +6z(32z-5)\,\theta^2 \\
  &+6z(32z-3)\,\theta+4z(16z-1)
\end{aligned}
\\
\hline
A &
\begin{aligned}
(16z-1)&(64z-1)\,\theta^4+32z(128z-5)\,\theta^3 \\
 &+4z(1472z-33)\,\theta^2+4z(896z-13)\,\theta+8z(96z-1)
\end{aligned}
\\
\hline
\end{array}
\]

\vspace{.2cm}
\[
\begin{array}{|c|l|}
\hline
 & \text{The $L$ operator in standard CPF} \\
\hline
F &
\begin{aligned}
 z^2 &(4z-1) (16z-1) D^3 + 3 z (128z^2-30z+1) D^2 \\
     &+ (448z^2-68z+1) D + 4(16z-1)
\end{aligned}
\\
\hline
A &
\begin{aligned}
 z^3 &(16z-1) (64z-1) D^4 + 2z^2(5120z^2-320z+3) D^3 \\
 &+ z (25344z^2-1172z+7) D^2 + (14592z^2-424z+1) D + 8(96z-1)
\end{aligned}
 \\
\hline
\end{array}
\]

\newpage
\subsection{The case $d=5$} 

\[
\begin{array}{|c|c|c|c|c|c|c|}
\hline
 & \text{Finite singularities} & \text{Exponents at each finite point} & \text{Exponents at } \infty & q & m & r \\
\hline
F & 1, 1/9, 1/25 & 0, 1, 1, 2 & 1, 1, 2, 2 &
\text{\thedimension} & 3 & 3 \\
\hline \stepcounter{dimension}
A & 1/4, 1/36, 1/100 & 0, 1, \tfrac32, 2, 3 & \tfrac12, 1, \tfrac32, 2, \tfrac52 &
\text{\thedimension} & 4 & 3 \\
\hline
\end{array}
\]
\

\[
\begin{array}{|c|l|}
\hline
 & \text{Recurrences} \\
\hline
F &
\begin{aligned}
(n &+3)^4 \, x_{n+3} \\
     &-[35(n+2)^4+70(n+2)^3+63(n+2)^2+28(n+2)+5] \, x_{n+2} \\
   & +((n+1)+1)^2 \, [259(n+1)^2+518(n+1)+285] \, x_{n+1} \\
   &- 3^2\cdot 5^2\cdot (n+1)^2 (n+2)^2 \, x_{n}=0
\end{aligned}
\\
\hline
A &
\begin{aligned}
(n &+3)^5 \, A_{n+3} \\
 &-2\,[2(n+2)+1]\,[35(n+2)^4+70(n+2)^3+63(n+2)^2+28(n+2)+5]\, A_{n+2} \\
 & +4\,[2(n+1)+1]\,[(n+1)+1]\,[2(n+1)+3]\,
       [259(n+1)^2+518(n+1)+285] \, A_{n+1} \\
 & - 14400\,(n+\tfrac12)(n+1)(n+\tfrac32)(n+2)(n+\tfrac52) \, A_{n}=0.
\end{aligned}
 \\
\hline
\end{array}
\]

\[
\begin{array}{|c|l|}
\hline
 & \text{The $L_{\gt}=zL$ operator in CPF (with ${R_k(\gt)}$)} \\
\hline
F &
\begin{aligned}
z^3 \cdot & [3^2\cdot 5^2\cdot (\theta+1)^2(\theta+2)^2]
  -z^2\cdot [(\theta+1)^2(259\,\theta^2+518\,\theta+285)] \\
 & +z \cdot [35\,\theta^4+70\,\theta^3+63\,\theta^2+28\,\theta+5]
  -\theta^4
\end{aligned}
\\
\hline
A &
\begin{aligned}
z^3 \cdot & [14400(\theta+\tfrac12)(\theta+1)(\theta+\tfrac32)
   (\theta+2)(\theta+\tfrac52)] \\
 & -z^2\cdot [4(2\theta+1)(\theta+1)(2\theta+3)
     (259\,\theta^2+518\,\theta+285)] \\
 & +z \cdot [2(2\theta+1)
       (35\,\theta^4+70\,\theta^3+63\,\theta^2+28\,\theta+5)]
       -\theta^5
\end{aligned}
\\
\hline
\end{array}
\]

\[
\begin{array}{|c|l|}
\hline
 & \text{The $L_{\gt} = z L$ operator (with $\widetilde{R_k}(z)$)} \\
\hline
F &
\begin{aligned}
(z-1)&(9z-1)(25z-1)\, \theta^4
 +2z(675z^2-518z+35)\,\theta^3 \\
 & +z(2925z^2-1580z+63)\,\theta^2+4z(675z^2-272z+7)\,\theta \\
 & +5z(180z^2-57z+1)
\end{aligned}
\\
\hline
A &
\begin{aligned}
(4z-1)&(36z-1)(100z-1)\, \theta^5
 +10z(10800 z^2-2072 z+35)\,\theta^4 \\
 &+4z(76500 z^2-10205 z+98)\,\theta^3
  +2z(202500 z^2-19790 z+119)\,\theta^2 \\
 &+4z(61650 z^2-4689 z+19)\,\theta
  +10z(5400 z^2-342 z+1)
\end{aligned}
\\
\hline
\end{array}
\]

\[
\begin{array}{|c|l|}
\hline
 & \text{The $L$ operator in standard CPF} \\
\hline
F &
\begin{aligned}
z^3 &(z-1)(9z-1)(25z-1) D^4 + 2z^2 (1350z^3-1295z^2+140z-3) D^3\\
     &+ z (8550z^3-6501z^2+518z-7) D^2 + (7200z^3-3963z^2+196z-1) D\\
     &+ 5(180z^2-57z+1)
\end{aligned}
\\
\hline
A &
\begin{aligned}
 z^4 & (4z-1)(36z-1)(100z-1) \, D^5 
  + z^3 (252000 z^3-62160 z^2+1750 z-10) \, D^4\\
  &+ z^2 (1314000 z^3-268740 z^2+5992 z-25) \, D^3 \\
  &+ z (2295000 z^3-369240 z^2+5964 z-15) \, D^2 \\
  &+ (1080000 z^3-124020 z^2+1196 z-1) \, D 
  +10 (5400 z^2-342 z+1)
\end{aligned}
 \\
\hline
\end{array}
\]

\subsection{The case $d=6$} 

\[
\begin{array}{|c|c|c|c|c|c|c|}
\hline
 & \text{Finite singularities} & \text{Exponents at each finite point} & \text{Exponents at } \infty & q & m & r \\
\hline
F & 1/4, 1/16, 1/36 & 0, 1, 2, 3, \tfrac32 & 1, 1, 2, 2, \tfrac32 &
\text{\thedimension} & 4 & 3 \\
\hline \stepcounter{dimension}
A & 1/16, 1/64, 1/144 & 0, 1, 2, 2, 3, 4
  & \tfrac12,1,\tfrac32,\tfrac32,2,\tfrac52 &
\text{\thedimension} & 5 & 3 \\
\hline
\end{array}
\]
\

\[
\begin{array}{|c|l|}
\hline
 & \text{Recurrences} \\
\hline
F &
\begin{aligned}
(n+&3)^5\,x_{n+3}
- 2(2n+5)\bigl(14n^4+140n^3+532n^2+910n+591\bigr)\,x_{n+2} \\
&+ 4(n+2)^3\bigl(196n^2+784n+843\bigr)\,x_{n+1}
- 1152 (n+1)^2 (2n+3) (n+2)^2 \,x_{n}=0
\end{aligned}
\\
\hline
A &
\begin{aligned}
(n+&3)^6\,A_{n+3}
 -4(2n+5)^2 (14(n+2)^4+28(n+2)^3+28(n+2)^2+14(n+2)+3)\, A_{n+2} \\
 &+16(n+2)^2(2n+3)(2n+5)(196(n+1)^2+392(n+1)+255)\, A_{n+1} \\  &-9216(n+1)(n+2)(2n+1)(2n+3)^2(2n+5)\,A_{n}=0 \\
\end{aligned}
 \\
\hline
\end{array}
\]

\[
\begin{array}{|c|l|}
\hline
 & \text{The $L_{\gt}=zL$ operator in CPF (with ${R_k(\gt)}$)} \\
\hline
F &
\begin{aligned}
z^3 \cdot & [1152(\theta+1)^2(\theta+2)^2(2\theta+3)] \\
 &-z^2\cdot [4(\theta+1)^3(196\,\theta^2+392\,\theta+255)] \\
 & +z \cdot [2(2\theta+1)(14\,\theta^4+28\,\theta^3+28\,\theta^2+14\,\theta+3)]
  -\theta^5
\end{aligned}
\\
\hline
A &
\begin{aligned}
z^3 \cdot & [9216(\theta+1)(\theta+2)(2\theta+1)(2\theta+3)^2(2\theta+5)] \\
 & -z^2 \cdot [16(\theta+1)^2(2\theta+1)(2\theta+3)(196\,\theta^2+392\,\theta+255) \\
 & + z \cdot  [4(2\theta+1)^2(14\,\theta^4+28\,\theta^3+28\,\theta^2+14\,\theta+3)
  -\theta^6
\end{aligned}
 \\
\hline
\end{array}
\]

\[
\begin{array}{|c|l|}
\hline
 & \text{The $L_{\gt} = z L$ operator (with $\widetilde{R_k}(z)$)} \\
\hline
F &
\begin{aligned}
(4z-1)&(16z-1)(36z-1)\,\theta^5
 + 20z(864z^2-196z+7)\,\theta^4 \\
 &+12z(4224z^2-673z+14)\,\theta^3
  +4z(18144z^2-2137z+28)\,\theta^2 \\
 &+4z(12672z^2-1157z+10)\,\theta
  +6z(2304z^2-170z+1)
\end{aligned}
\\
\hline
A &
\begin{aligned}
(16z-1)&(64z-1)(144z-1)\,\theta^6
 +96z(13824z^2-784z+7)\,\theta^5 \\
 &+8z(599040z^2-23600z+119)\,\theta^4
  +16z(552960z^2-15840z+49)\,\theta^3 \\
 &+16z(546624z^2-11941z+24)\,\theta^2
  +8z(542592z^2-9492z+13)\,\theta \\
 &+12z(69120z^2-1020z+1)
\end{aligned}
\\
\hline
\end{array}
\]

\[
\begin{array}{|c|l|}
\hline
 & \text{The $L$ operator in standard CPF} \\
\hline
F &
\begin{aligned}
 z^4 & (4z - 1)(16z - 1)(36z - 1)\, D^5 +
 10z^3(4032z^3 - 1176z^2 + 70z - 1)\, D^4 \\
 & + z^2(211968z^3 - 51196z^2 + 2408z - 25)\, D^3
  + 3z(126720z^3 - 23992z^2 + 812z - 5)\, D^2 \\
 & +(193536z^3 - 25956z^2 + 516z - 1)\, D
  + 6(2304z^2 - 170z + 1)
\end{aligned}
\\
\hline
A &
\begin{aligned}
z^5&(16z-1)(64z-1)(144z-1) D^6 
+3z^4\left(1179648z^3-87808z^2+1344z-5\right) D^5 \\
&+z^3\left(27648000z^3-1756800z^2+22232z-65\right) D^4 \\
&+2z^2\left(42024960z^3-2198400z^2+21728z-45\right) D^3 \\
&+z\left(93312000z^3-3790800z^2+26424z-31\right) D^2 \\
&+\left(28200960z^3-797040z^2+3120z-1\right)D 
+12\left(69120z^2-1020z+1\right)
\end{aligned}
 \\
\hline
\end{array}
\]


\subsection{The case $d=7$} 

\vspace{.2cm}
\[
\begin{array}{|c|c|c|c|c|c|c|}
\hline
 & \text{Finite singularities} & \text{Exponents at each finite point} & \text{Exponents at } \infty & q & m & r \\
\hline
F & 1,1/9, 1/25,1/49 & 0,1,2,\tfrac52,3,4,5 & 1,1,2,2,3,3 &
\text{\thedimension} & 5 & 4 \\
\hline \stepcounter{dimension}
A & 1/4, 1/36, 1/100, 1/196 & 0,1,2,\tfrac52,3,4,5 & \tfrac12,1,\tfrac32,2,\tfrac52,3,\tfrac72 &
\text{\thedimension} & 6 & 4 \\
\hline
\end{array}
\]
\

\[
\begin{array}{|c|l|}
\hline
 & \text{Recurrences} \\
\hline
F &
\begin{aligned}
(n+&4)^6\, x_{n+4} \\
&- \left( 84\,{n}^{6}+1764\,{n}^{5}+15498\,{n}^{4}+72912\,{n}^{3}+193716\,{n}^{2}+275562\,n+163951 \right) \, x_{n+3} \\
&+3\,(n+3)^{2} \left( 658\,{n}^{4}+7896\,{n}^{3}+35928\,{n}^{2}+73440\,n+56879 \right)\, x_{n+2} \\
&-2\,(n+2)^{2} (n+3)^{2} \left( 6458\,{n}^{2}+32290\,n+41337 \right)\, x_{n+1} \\
&+11025\, (n+1)^{2} \, (n+2)^{2} \, (n+3)^{2}\, x_{n} = 0.
\end{aligned}
\\
\hline
A &
\begin{aligned}
(n+&4)^7 A_{n+4} \\
  &-2(2n+7) \bigl(84n^6+1764n^5+15498n^4+72912n^3+193716n^2+275562n+163951 \bigr) A_{n+3} \\
  &+12(n+3)(2n+5)(2n+7) \bigl(658n^4+7896n^3+35928n^2+73440n+56879\bigr) A_{n+2} \\
  &-16(n+2)(n+3)(2n+3)(2n+5)(2n+7)     \bigl(6458n^2+32290n+41337\bigr) A_{n+1} \\
  &+176400 (n+1)(n+2)(n+3)(2n+1)(2n+3)(2n+5)(2n+7)A_{n}=0.
\end{aligned}
 \\
\hline
\end{array}
\]

\[
\begin{array}{|c|l|}
\hline
 & \text{The $L_{\gt}=zL$ operator in CPF (with ${R_k(\gt)}$)} \\
\hline
F &
\begin{aligned}
z^4 \cdot &[11025 (\theta+1)^2(\theta+2)^2(\theta+3)^2] \\
 &-z^3 \cdot [2(\theta+1)^2(\theta+2)^2(6458\,\theta^2+19374\,\theta+15505)] \\
 &+z^2 \cdot [3(\theta+1)^2(658\,\theta^4
    +2632\,\theta^3+4344\,\theta^2+3424\,\theta+1071)] \\
 &-z \cdot [84\,\theta^6+252\,\theta^5+378\,\theta^4
    +336\,\theta^3+180\,\theta^2+54\,\theta+7] + \theta^6
\end{aligned}
\\
\hline
A &
\begin{aligned}
z^4 \cdot &[176400(\theta+1)(\theta+2)(\theta+3)(2\theta+1)(2\theta+3)(2\theta+5)(2\theta+7)] \\
 &-z^3 \cdot [16(\theta+1)(\theta+2)(2\theta+1)(2\theta+3)(2\theta+5)(6458\,\theta^2+19374\,\theta+15505)] \\
 &+z^2 \cdot [12(\theta+1)(2\theta+1)(2\theta+3)(658\,\theta^4+2632\,\theta^3+4344\,\theta^2+3424\,\theta+1071)] \\
 &-z \cdot [2(2\theta+1)(84\,\theta^6+252\,\theta^5+378\,\theta^4
        +336\,\theta^3+180\,\theta^2+54\,\theta+7)] + \theta^7
\end{aligned}
 \\
\hline
\end{array}
\]

\[
\begin{array}{|c|l|}
\hline
 & \text{The $L_{\gt} = z L$ operator (with $\widetilde{R_k}(z)$)} \\
\hline
F &
\begin{aligned}
(z&-1)(9z-1)(25z-1)(49z-1)\,\theta^6 \\
 &+36z(3675z^3-3229z^2+329z-7)\,\theta^5 \\
 &+18z(35525z^3-23967z^2+1711z-21)\,\theta^4 \\
 &+24z(66150z^3-35199z^2+1843z-14)\,\theta^3 \\
 &+3z(709275z^3-306590z^2+12263z-60)\,\theta^2 \\
 &+6z(242550z^3-87852z^2+2783z-9)\,\theta \\
 &+7z(56700z^3-17720z^2+459z-1)
\end{aligned}
\\
\hline
\end{array}
\]

\[
\begin{array}{|c|l|}
\hline
 & \text{The $L_{\gt} = z L$ operator (with $\widetilde{R_k}(z)$)} \\
\hline
A &
\begin{aligned}
(4z&-1)(36z-1)(100z-1)(196z-1)\,\theta^7 \\
&+168z(235200z^3-51664z^2+1316z-7)\,\theta^6 \\
&+24z(9466800z^3-1589560z^2+28099z-84)\,\theta^5 \\
&+60z(11524800z^3-1513848z^2+19350z-35)\,\theta^4 \\
&+48z(24876075z^3-2626582z^2+25263z-29)\,\theta^3 \\
&+48z(24130050z^3-2112201z^2+15887z-12)\,\theta^2 \\
&+4z(144074700z^3-10821620z^2+66159z-34)\,\theta \\
&+14z(7938000z^3-531600z^2+2754z-1)
\end{aligned}
\\
\hline
\end{array}
\]

\[
\begin{array}{|c|l|}
\hline
 & \text{The $L$ operator in standard CPF} \\
\hline
F &
\begin{aligned}
z^5&(z-1)(9z-1)(25z-1)(49z-1) D^6 \\
&+3z^4\left(99225z^4-103328z^3+13818z^2-504z+5\right) D^5 \\
&+z^3\left(2679075z^4-2433386z^3+277548z^2-8358z+65\right) D^4 \\
&+6z^2\left(1620675z^4-1250292z^3+117130z^2-2744z+15\right) D^3 \\
&+z\left(13693050z^4-8617996z^3+623925z^2-10218z+31\right) D^2 \\
&+\left(5953500z^4-2852224z^3+142335z^2-1284z+1\right) D \\
&+7\left(56700z^3-17720z^2+459z-1\right).
\end{aligned}
\\
\hline
A &
\begin{aligned}
z^6&(4z-1)(36z-1)(100z-1)(196z-1) D^7 \\
 &+21 z^5 (4704000 z^4 - 1239936 z^3 + 42112 z^2 - 392 z + 1) D^6 \\
 &+4 z^4 (303760800 z^4 - 71017520 z^3 + 2103114 z^2 - 16674 z + 35) D^5 \\
 &+10 z^3 (651974400 z^4 - 132581456 z^3 + 3332988 z^2 - 21630 z + 35) D^4 \\
 &+z^2 (15428826000 z^4 - 2654770720 z^3 + 54442728 z^2 - 271368 z + 301) D^3 \\
 &+3 z (4797198000 z^4 - 669604880 z^3 + 10495536 z^2 - 35772 z + 21) D^2 \\
 &+(3889620000 z^4 - 409234560 z^3 + 4327884 z^2 - 7732 z + 1) D \\
 &+14 (7938000 z^3 - 531600 z^2 + 2754 z - 1)
\end{aligned}
 \\
\hline
\end{array}
\]


\hfill
\subsection{The case $d=8$} 

\vspace{.2cm}
\[
\begin{array}{|c|c|c|c|c|c|c|}
\hline
 & \text{Finite singularities} & \text{Exponents at each finite point} & \text{Exponents at } \infty & q & m & r \\
\hline
F & 1/4, 1/16, 1/36, 1/64 & 0, 1, 2, \tfrac52, 3, 4, 5
   & 1, 1, 2, 2, 2, 3, 3 &
\text{\thedimension} & 6 & 4 \\
\hline \stepcounter{dimension}
A & 1/16, 1/64, 1/144, 1/256 & 0, 1, 2, 3, 3, 4, 5, 6
   &  \tfrac12,1,\tfrac32,2,2,\tfrac52,3,\tfrac72  &
\text{\thedimension} & 7 & 4 \\
\hline
\end{array}
\]
\hfill

\[
\begin{array}{|c|l|}
\hline
 & \text{Recurrences} \\
\hline
F &
\begin{aligned}
(n+&4)^7\, x_{n+4} \\
&-2\, (2n+7)\left( 30\,n^6 + 630\, n^5 + 5544\, n^4 + 26166\, n^3 + 69849\, n^2 + 99981 \, n + 59944 \right) \, x_{n+3} \\
&+12\, (n+3)^3\left( 364 \, n^4 + 4368\, n^3 + 20021\, n^2 + 41502\, n + 32816 \right) \, x_{n+2} \\
&-128\, (n+2)^2(n+3)^2(2n+5) \left( 205\, n^2 + 1025\, n + 1374\right)\, x_{n+1} \\
&+147456\, (n+1)^2 \, (n+2)^3 \, (n+3)^2\, x_{n} = 0.
\end{aligned}
\\
\hline
A &
\begin{aligned}
(n+&4)^8 \,A_{n+4} \\
&-4(2n+7)^2
\left(30n^6+630n^5+5544n^4+26166n^3
+69849n^2+99981n+59944\right)\, A_{n+3} \\
&+48(n+3)^2(2n+5)(2n+7)
\left(364n^4+4368n^3+20021n^2+41502n+32816\right)\, A_{n+2} \\
&\quad -1024(n+2)(n+3)(2n+3)(2n+5)^2(2n+7)
\left(205n^2+1025n+1374\right)\, A_{n+1} \\
&+2359296(n+1)(n+2)^2(n+3)(2n+1)(2n+3)(2n+5)(2n+7)\, A_n =0
\end{aligned}
\\
\hline
\end{array}
\]

\[
\begin{array}{|c|l|}
\hline
 & \text{The $L_{\gt}=zL$ operator in CPF (with ${R_k(\gt)}$)} \\
\hline
F &
\begin{aligned}
z^4 \cdot &[147456(\theta+1)^2(\theta+2)^3(\theta+3)^2] \\
 &-z^3 \cdot [128(\theta+1)^2(\theta+2)^2(2\theta+3)
   (205\,\theta^2+615\,\theta+554)] \\
 &+z^2 \cdot [12(\theta+1)^3
   (364\,\theta^4+1456\,\theta^3+2549\,\theta^2+2186\,\theta+776)] \\
 &-z \cdot [2(2\theta+1)(30\,\theta^6+90\,\theta^5+144\,\theta^4
           +138\,\theta^3+81\,\theta^2+27\,\theta+4)] \\
 &+ \theta^7
\end{aligned}
 \\
\hline
A &
\begin{aligned}
z^4 \cdot &[2359296(\theta+1)(\theta+2)^2(\theta+3)
  (2\theta+1)(2\theta+3)(2\theta+5)(2\theta+7)] \\
 &-z^3 \cdot [1024(\theta+1)(\theta+2)(2\theta+1)
  (2\theta+3)^2(2\theta+5) (205\,\theta^2+615\,\theta+554)] \\
 &+z^2 \cdot [48z(\theta+1)^2(2\theta+1)(2\theta+3)
  (364\,\theta^4+1456\,\theta^3+2549\,\theta^2+2186\,\theta+776)] \\
 &-z \cdot [4(2\theta+1)^2 (30\,\theta^6+90\,\theta^5+144\,\theta^4
  +138\,\theta^3+81\,\theta^2+27\,\theta+4)] \\
 &+ \theta^8
\end{aligned}
 \\
\hline
\end{array}
\]

\[
\begin{array}{|c|l|}
\hline
 & \text{The $L_{\gt} = z L$ operator (with $\widetilde{R_k}(z)$)} \\
\hline
F &
\begin{aligned}
(4z-&1)(16z-1)(36z-1)(64z-1)\,\theta^7\\
&+84z(24576z^3-6560z^2+364z-5)\,\theta^6\\
&+12z(1007616z^3-206432z^2+8009z-63)\,\theta^5\\
&+60z(638976z^3-103008z^2+2913z-14)\,\theta^4\\
&+12z(5910528z^3-769504z^2+16437z-50)\,\theta^3\\
&+6z(12730368z^3-1374336z^2+22870z-45)\,\theta^2\\
&+2z(22118400z^3-2032384z^2+27084z-35)\,\theta\\
&+8z(1327104z^3-106368z^2+1164z-1)
\end{aligned}
\\
\hline
\end{array}
\]

\[
\begin{array}{|c|l|}
\hline
 & \text{The $L_{\gt} = z L$ operator (with $\widetilde{R_k}(z)$)} \\
\hline
A &
\begin{aligned}
(16z-&1)(64z-1)(144z-1)(256z-1)\,\theta^8\\
&+384z(1572864z^3-104960z^2+1456z-5)\,\theta^7\\
&+24z(170655744z^3-8705024z^2+83728z-161)\,\theta^6\\
&+24z(638582784z^3-25445376z^2+176224z-203)\,\theta^5\\
&+48z(718061568z^3-22891200z^2+118183z-85)\,\theta^4\\
&+24z(1976303616z^3-51713280z^2+205496z-95)\,\theta^3\\
&+4z(9672523776z^3-213468416z^2+673188z-205)\,\theta^2\\
&+4z(4225499136z^3-80968704z^2+209064z-43)\,\theta\\
&+16z(185794560z^3-3191040z^2+6984z-1) \\ &
\end{aligned}
\\
\hline
\end{array}
\]

\[
\begin{array}{|c|l|}
\hline
 & \text{The $L$ operator in standard CPF} \\
\hline
F &
\begin{aligned}
z^6&(4z - 1)(16z - 1)(36z - 1)(64z - 1) \, D^7 \\
 &+21 z^5 (245760 z^4 - 78720 z^3 + 5824 z^2 - 140 z + 1) \, D^6(z) \\
 &+4 z^4 (15925248 z^4 - 4522496 z^3 + 291567 z^2 - 5964 z + 35) \, D^5 \\
 &+10 z^3 (34504704 z^4 - 8513792 z^3 + 465210 z^2 - 7770 z + 35) \, D^4  \\
 &+z^2 (833421312 z^4 - 173636608 z^3 + 7715232 z^2 - 98460 z + 301) \, D^3 \\
 &+ 3 z (270729216 z^4 - 45585920 z^3 + 1539024 z^2 - 13290 z + 21) \, D^2 \\
 &+(244187136 z^4 - 30806016 z^3 + 694464 z^2 - 3076 z + 1) \, D \\
 &+ 8 (1327104 z^3 - 106368 z^2 + 1164 z - 1) \\ &
\end{aligned}
\\
\hline
A &
\begin{aligned}
z^7&(16z-1)(64z-1)(144z-1)(256z-1)\,D^8 \\
&+4z^6\left(415236096z^4-33587200z^3+628992z^2-3840z+7\right)\,D^7 \\
&+2z^5\left(13410238464z^4-974368768z^3+16170432z^2-85932z+133\right)\,D^6 \\
&+6z^4\left(33492566016z^4-2152300544z^3+31004736z^2-139272z+175\right)\,D^5 \\
&+3z^3\left(243184435200z^4-13535104000z^3+164382704z^2-597480z+567\right)\,D^4 \\
&+6z^2\left(204043714560z^4-9546680320z^3+93559600z^2-256320z+161\right)\,D^3 \\
&+z\left(821955133440z^4-30871296000z^3+227031552z^2-411004z+127\right)\,D^2 \\
&+\left(157553786880z^4-4380917760z^3+21001536z^2-18488z+1\right)\,D \\
&+16\left(185794560z^3-3191040z^2+6984z-1\right) \\ &
\end{aligned}
\\
\hline
\end{array}
\]

\newpage
\addcontentsline{toc}{section}{References}

\def\cprime{$'$}

\end{document}